\newtheorem{theorem}{Theorem}[section]
\newtheorem{definition}[theorem]{Definition}
\newtheorem{problem}{Problem}%[theorem]
\newtheorem{proposition}[theorem]{Proposition}
\newtheorem{remark}[theorem]{Remark}
\title{\bf A Study on Arithmetic Integer Additive Set-Indexers of Graphs}
\author{{\bf N K Sudev \footnote{Department of Mathematics, Vidya Academy of Science \& Technology, Thalakkottukara, Thrissur - 680501, email: {\em sudevnk@gmail.com}}} and {\bf K A Germina\footnote{Department of Mathematics, School of Mathematical \& Physical Sciences, Central University of Kerala, Kasaragod, email:{\em srgerminaka@gmail.com}}}}
\date{}
\begin{document}
\maketitle

\begin{abstract}
A set-indexer of a graph $G$ is an injective set-valued function $f:V(G) \rightarrow2^{X}$ such that the function $f^{\oplus}:E(G)\rightarrow2^{X}-\{\emptyset\}$ defined by $f^{\oplus}(uv) = f(u ){\oplus} f(v)$ for every $uv{\in} E(G)$ is also injective, where $2^{X}$ is the set of all subsets of $X$ and $\oplus$ is the symmetric difference of sets. An integer additive set-indexer is defined as an injective function $f:V(G)\rightarrow 2^{\mathbb{N}_0}$ such that the induced function $f^+:E(G) \rightarrow 2^{\mathbb{N}_0}$ defined by $f^+ (uv) = f(u)+ f(v)$ is also injective. A graph $G$ which admits an IASI is called an IASI graph. An IASI $f$ is said to be a weak IASI if $|f^+(uv)|=max(|f(u)|,|f(v)|)$ and an IASI $f$ is said to be a strong IASI if $|f^+(uv)|=|f(u)| |f(v)|$ for all $u,v\in V(G)$. In this paper, we discuss about a special type of integer additive set-indexers called arithmetic integer additive set-indexer and establish some results on this type of integer additive set-indexers. We also check the admissibility of arithmetic integer additive set-indexer by certain graphs associated with a given graph.
\end{abstract}
\textbf{Key words}: Integer additive set-indexers, arithmetic integer additive set-indexers, deterministic index.\\
\textbf{AMS Subject Classification : 05C78}

\section{Preliminaries}

For all  terms and definitions, not defined in this paper, we refer to \cite{FH} and for more about graph labeling, we refer to \cite{JAG}. Unless mentioned otherwise, all graphs considered here are simple, finite and have no isolated vertices.

Let $\mathbb{N}_0$ denote the set of all non-negative integers. For all $A, B \subseteq \mathbb{N}_0$, the sum of these sets, denoted by  $A+B$, is defined by $A + B = \{a+b: a \in A, b \in B\}$.  In \cite{MBN}, the set $A+B$ defined above is termed as the {\em sumset} of the sets $A$ and $B$.

All sets mentioned in this paper are finite sets of non-negative integers. We denote the cardinality of a set $A$ by $|A|$.

An {\em integer additive set-indexer} (IASI, in short) is defined in \cite{GA} as an injective function $f:V(G)\rightarrow 2^{\mathbb{N}_0}$ such that the induced function $f^{+}:E(G) \rightarrow 2^{\mathbb{N}_0}$ defined by $f^{+} (uv) = f(u)+ f(v)$ is also injective.  A graph $G$ which admits an IASI is called an IASI graph. 
 
An IASI is said to be {\em $k$-uniform} if $|f^{+}(e)| = k$ for all $e\in E(G)$. That is, a connected graph $G$ is said to have a $k$-uniform IASI if all of its edges have the same set-indexing number $k$.

The cardinality of the labeling set of an element (vertex or edge) of a graph $G$ is called the {\em set-indexing number} of that element. 

The vertex set $V$ of a graph $G$ is defined to be {\em $l$-uniformly set-indexed}, if all the vertices of $G$ have the set-indexing number $l$.

 Two ordered pairs $(a,b)$ and $(c,d)$ in $A\times B$ {\em compatible} if $a+b=c+d$. If $(a,b)$ and $(c,d)$ are compatible, then we write $(a,b)\sim (c,d)$. Clearly, $\sim$ is an equivalence relation.

A {\em compatible class} of an ordered pair $(a,b)$ in $A\times B$ with respect to the integer $k=a+b$ is the subset of $A\times B$ defined by $\{(c,d)\in A\times B:(a,b)\sim (c,d)\}$ and is denoted by $[(a,b)]_k$ or $\mathsf{C}_k$. 

If $(a,b)$ is the only element in the compatibility class $[(a,b)]_k$, then it is called a {\em trivial class}. The compatibility classes which contain the maximum possible number of elements is called {\em saturated classes}. The compatibility class that contains maximum number of elements is called a {\em maximal compatibility class}.

The number of distinct compatibility classes in $A\times B$ is called the {\em compatibility index} of the pair of sets $A\times B$ and is denoted by $\mho_{(A,B)}$. Hence we have,

\begin{proposition}
If $A$ and $B$ are two non-empty sets of non-negative integers, then $|A+B|=\mho_{(A,B)}$.
\end{proposition}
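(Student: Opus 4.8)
The plan is to exhibit an explicit bijection between the elements of the sumset $A+B$ and the compatibility classes of $A\times B$, from which the equality of cardinalities follows at once. The natural candidate is the map $\phi$ that sends a sumset element $s\in A+B$ to the set of all ordered pairs in $A\times B$ summing to $s$, i.e. $\phi(s)=\{(a,b)\in A\times B : a+b=s\}$.

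First I would check that $\phi$ is well defined and lands in the set of compatibility classes. Since $s\in A+B$, by definition of the sumset there exists at least one pair $(a_0,b_0)\in A\times B$ with $a_0+b_0=s$; hence $\phi(s)$ is non-empty, and unwinding the definition of the equivalence $\sim$ shows that $\phi(s)$ is precisely the class $[(a_0,b_0)]_s=\mathsf{C}_s$. Conversely, every compatibility class has the form $[(a,b)]_k$ for some pair $(a,b)\in A\times B$ with $k=a+b$, and such a $k$ clearly lies in $A+B$; so the assignment $[(a,b)]_k\mapsto k$ is a well-defined map $\psi$ from compatibility classes to $A+B$ (it does not depend on the chosen representative, since every representative of the class has the same coordinate sum $k$ by definition of $\sim$).

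Next I would verify that $\phi$ and $\psi$ are mutually inverse: starting from $s\in A+B$, the class $\phi(s)$ consists of pairs with sum $s$, so $\psi(\phi(s))=s$; starting from a class $C=[(a,b)]_k$, we have $\psi(C)=k$ and then $\phi(k)$ is the class of all pairs summing to $k$, which is exactly $C$. Hence $\phi$ is a bijection, and therefore $|A+B|$ equals the number of distinct compatibility classes in $A\times B$, which is by definition $\mho_{(A,B)}$.

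The argument is essentially a bookkeeping exercise, so there is no serious obstacle; the only point requiring a little care is the well-definedness of $\psi$ — that assigning to a class the common coordinate sum of its members is independent of the representative chosen — but this is immediate from the definition of the equivalence relation $\sim$. I would present the whole thing compactly as the construction of the bijection $\phi$ together with its inverse.
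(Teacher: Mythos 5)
Your argument is correct: the bijection $s\mapsto\{(a,b)\in A\times B : a+b=s\}$ between sumset elements and compatibility classes is exactly the content of the statement, and the paper itself offers no proof, presenting the proposition as an immediate consequence of the definitions. Your write-up simply makes that implicit identification explicit, so it matches the paper's (tacit) reasoning.
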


\begin{proposition}\label{P-CardCC}
\cite{GS0} The maximum possible cardinality of a compatibility class in $A\times B$ is $n$, where $n=min(|A|,|B|)$. That is, the cardinality of a saturated class in $(A,B)$ is $min(|A|,|B|)$.
\end{proposition}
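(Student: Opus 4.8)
The plan is to establish the statement in two steps: an upper bound showing that no compatibility class can have more than $n$ elements, and a construction showing this bound is attained, so that a saturated class indeed has exactly $n$ elements.

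For the upper bound, I would fix an integer $k$ and look at the class $\mathsf{C}_k=[(a,b)]_k=\{(c,d)\in A\times B : c+d=k\}$. The one idea the whole argument rests on is that fixing the sum $k$ pins down the second coordinate in terms of the first: if $(c,d)\in\mathsf{C}_k$, then necessarily $d=k-c$. Consequently the first-coordinate projection $(c,d)\mapsto c$ is injective on $\mathsf{C}_k$, which gives $|\mathsf{C}_k|\le|A|$; the symmetric argument with the second-coordinate projection gives $|\mathsf{C}_k|\le|B|$. Hence $|\mathsf{C}_k|\le\min(|A|,|B|)=n$ for every $k$, so the maximum possible cardinality of a compatibility class is at most $n$.

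For attainability, I would assume without loss of generality that $|A|\le|B|$, so $n=|A|$, and take $A=\{0,1,\dots,|A|-1\}$ and $B=\{0,1,\dots,|B|-1\}$. Setting $k=|A|-1$, each of the $|A|$ distinct pairs $(i,\,|A|-1-i)$ with $0\le i\le|A|-1$ has first coordinate $i\in A$ and second coordinate $|A|-1-i\in\{0,1,\dots,|A|-1\}\subseteq B$, hence lies in $\mathsf{C}_k$. Combined with the upper bound this forces $|\mathsf{C}_k|=|A|=n$, so $\mathsf{C}_k$ is a saturated class of cardinality $\min(|A|,|B|)$.

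I do not expect a serious obstacle here: the projection observation is immediate and simultaneously yields both inequalities. The only point needing a little care is in the construction, where one must check that the complementary values $|A|-1-i$ actually fall inside $B$; this is exactly why the sum is chosen to be $k=|A|-1$ rather than something larger, since a larger sum would push some second coordinates beyond $|B|-1$ and could shrink the class.
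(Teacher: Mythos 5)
Your proof is correct. Note that the paper itself offers no proof of this proposition --- it is quoted from the reference [GS0] --- so there is nothing internal to compare against; but the projection argument you give (fixing the sum $k$ determines the second coordinate from the first, so $\mathsf{C}_k$ injects into $A$ and, symmetrically, into $B$) is the natural and essentially unique way to obtain the bound $|\mathsf{C}_k|\le\min(|A|,|B|)$, and your construction with initial segments of $\mathbb{N}_0$ and $k=|A|-1$ correctly shows the bound is attained. The only caveat worth recording is one you implicitly handle: for an arbitrary pair $(A,B)$ a saturated class need not exist (e.g.\ $A=\{0,10\}$, $B=\{0,1\}$ has only trivial classes), so the second sentence of the proposition should be read as ``a saturated class, when it exists, has cardinality $\min(|A|,|B|)$,'' which is exactly what your two-step argument establishes.
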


A {\em weak IASI} is defined in  \cite{GS1} as an IASI $f$ if for every $u,v\in V(G)$, $|f^{+}(uv)|=max(|f(u)|,|f(v)|)$. A graph which admits a weak IASI may be called a {\em weak IASI graph}. A weak  IASI is said to be {\em weakly uniform IASI} if $|f^{+}(uv)|=k$, for all $u,v\in V(G)$ and for some positive integer $k$. 

\begin{theorem}
\cite{GS0} For any two adjacent vertices $u$ and $v$ of $G$, if $|f^+(uv)|=min\{|f(u)|,|f(v)|\}$, then either $|f(u)|=1$ or $|f(v)|=1$. That is, if $f$ is a weak integer additive set-indexer of a graph $G$, then at least one end vertex of every edge of $G$ must have a singleton set-label.
\end{theorem}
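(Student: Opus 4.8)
The plan is to reduce both assertions of the theorem to the single elementary fact that for any two non-empty finite sets $A,B\subseteq\mathbb{N}_0$ one has $|A+B|\ge |A|+|B|-1$. Granting this, write $m=|f(u)|$ and $n=|f(v)|$ and assume without loss of generality $m\le n$, so that $\min\{|f(u)|,|f(v)|\}=m$ and $\max\{|f(u)|,|f(v)|\}=n$. Since $f^{+}(uv)=f(u)+f(v)$, the hypothesis $|f^{+}(uv)|=m$ combined with the inequality gives $m\ge m+n-1$, hence $n\le 1$, so $m=n=1$; in particular at least one of $|f(u)|,|f(v)|$ is $1$. The second assertion follows the same way: if $f$ is a weak IASI then $|f^{+}(uv)|=\max\{|f(u)|,|f(v)|\}=n$, and the inequality now reads $n\ge m+n-1$, forcing $m\le 1$, i.e. $m=1$. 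Thus on every edge the end vertex with the smaller set-indexing number carries a singleton set-label.

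The only step requiring justification is the sumset bound $|A+B|\ge |A|+|B|-1$, which I would prove by the standard monotone-chain argument: listing $A=\{a_{1}<a_{2}<\cdots<a_{m}\}$ and $B=\{b_{1}<b_{2}<\cdots<b_{n}\}$, the elements
\[
a_{1}+b_{1}<a_{1}+b_{2}<\cdots<a_{1}+b_{n}<a_{2}+b_{n}<\cdots<a_{m}+b_{n}
\]
form a strictly increasing sequence of $m+n-1$ distinct members of $A+B$. Alternatively, one can stay within the language of the paper: by Proposition~\ref{P-CardCC} every compatibility class in $A\times B$ has at most $\min(|A|,|B|)$ elements, and since the $|A|\,|B|$ ordered pairs are partitioned into $\mho_{(A,B)}=|A+B|$ classes, this already yields $|A+B|\ge\max(|A|,|B|)$, which settles the case $m\neq n$ immediately; for $m=n$ one notes in addition that the class $[(a_1,b_1)]_{a_1+b_1}$ of the minimal sum is trivial (any $(c,d)$ with $c+d=a_1+b_1$ must have $c\ge a_1$, $d\ge b_1$, hence $c=a_1$, $d=b_1$), so not all classes can be saturated once $m\ge 2$, again forcing $m=1$.

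I do not anticipate a genuine obstacle; the only care needed is the bookkeeping of which of $|f(u)|$, $|f(v)|$ realizes the minimum and keeping the chain of inequalities among $|f^{+}(uv)|$, $|f(u)|+|f(v)|-1$, and $\min$/$\max$ consistent. Since the theorem collapses to one line once the sumset inequality is available, the write-up should isolate that inequality (with the short chain-of-sums proof, or by citing Proposition~\ref{P-CardCC} and the triviality of the minimal-sum class) and then simply instantiate it with $A=f(u)$ and $B=f(v)$.
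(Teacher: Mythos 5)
The paper itself offers no proof of this theorem: it is quoted from the cited reference \cite{GS0} and stated without argument, so there is no in-paper proof to compare yours against. On its own merits your proposal is correct and complete. The whole theorem does collapse onto the sumset inequality $|A+B|\ge |A|+|B|-1$, your monotone-chain proof of that inequality is the standard and fully rigorous one, and the two instantiations are handled correctly: under the hypothesis $|f^{+}(uv)|=\min\{|f(u)|,|f(v)|\}$ you in fact obtain the stronger conclusion that \emph{both} end vertices carry singleton set-labels (which of course implies the stated disjunction), while under the weak-IASI hypothesis $|f^{+}(uv)|=\max\{|f(u)|,|f(v)|\}$ you correctly get that the vertex realizing the minimum is a singleton. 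Your alternative route through Proposition~1.2 is also sound --- the partition of the $|A|\,|B|$ ordered pairs into $\mho_{(A,B)}=|A+B|$ compatibility classes of size at most $\min(|A|,|B|)$, together with the triviality of the minimal-sum class, does force $\min(|A|,|B|)=1$ --- and it has the merit of staying entirely inside the paper's own vocabulary of compatibility classes and saturated classes. Either write-up would serve; the chain-of-sums version is shorter and self-contained, so I would lead with that and mention the compatibility-class argument as a remark.
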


A {\em strong IASI} is defined in \cite{GS2} as an IASI $f$ if $|f^{+}(uv)|=|f(u)| |f(v)|$ for all $u,v\in V(G)$. A graph which admits a  strong IASI may be called a {\em strong IASI graph}. A  strong  IASI is said to be  {\em strongly uniform IASI} if $|f^{+}(uv)|=k$, for all $u,v\in V(G)$ and for some positive integer $k$.

In this paper, we introduce a particular type of integer additive set-indexer called arithmetic integer additive set-indexer and establish some results on arithmetic integer additive set-indexers.

\section{Arithmetic Integer Additive Set-Indexers}

The integer additive set-indexers, under which the set-labels of the elements of a given graph $G$ following specific patterns, are of special interest. In this paper, we study the characteristics graphs, the elements of whose set-labels are in arithmetic progressions. Note that the elements in the set-labels of all elements of $G$ are in arithmetic progression, they must contain at least three elements. Hence, we have,

\begin{proposition}
An arithmetic integer additive set-indexer of a graph $G$ will never be a weak integer additive set-indexer of $G$.
\end{proposition}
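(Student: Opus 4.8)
The plan is to argue by contradiction, leaning on the characterisation of weak IASIs recalled in Section~1. Suppose, to the contrary, that some IASI $f$ of $G$ is simultaneously an arithmetic IASI and a weak IASI. The first step is to extract the numerical content of $f$ being arithmetic: by the convention adopted immediately before the statement, the elements of every set-label form an arithmetic progression, and such a progression is taken to have at least three terms; hence $|f(w)|\ge 3$ for every vertex $w\in V(G)$.

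The second step uses the standing hypothesis that $G$ has no isolated vertices, so $G$ contains at least one edge, say $uv$. Applying the theorem of \cite{GS0} quoted in Section~1 --- which asserts that in any weak IASI graph at least one end vertex of every edge must have a singleton set-label --- we obtain $|f(u)|=1$ or $|f(v)|=1$. This contradicts the inequality $|f(u)|,|f(v)|\ge 3$ established in the first step, so no arithmetic IASI of $G$ can be a weak IASI.

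There is essentially no technical obstacle: the result is a short deduction combining one definitional observation with an already established theorem. The only point requiring care is to make explicit that ``the elements of a set-label are in arithmetic progression'' is to be read as ``form an arithmetic progression of length at least three'', which rules out singleton set-labels; once this is granted, incompatibility with the weak IASI condition is immediate.

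If one prefers not to invoke the quoted theorem directly, the same contradiction can be reached by a counting argument: a weak IASI forces $|f^{+}(uv)|=\max(|f(u)|,|f(v)|)$, whereas for finite sets of non-negative integers one always has $|f(u)+f(v)|\ge |f(u)|+|f(v)|-1$, and since $\min(|f(u)|,|f(v)|)\ge 3>1$ this lower bound strictly exceeds $\max(|f(u)|,|f(v)|)$; thus the weak condition cannot hold under an arithmetic labelling.
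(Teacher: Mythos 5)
Your proposal is correct and follows essentially the same route as the paper, which derives the proposition directly from the convention that AP-set-labels contain at least three elements together with the quoted theorem from \cite{GS0} that a weak IASI forces a singleton set-label on one end of every edge. Your alternative via the sumset bound $|f(u)+f(v)|\ge |f(u)|+|f(v)|-1$ is a nice self-contained substitute for the citation, but it is not a different strategy in any essential sense.
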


By the term, an {\em arithmetically progressive set}, (AP-set, in short), we mean a set whose elements are in arithmetic progression. We call the common difference of the set-label of an element of a given graph, the {\em deterministic index} of that element.

\begin{definition}{\rm
Let $f:V(G)\to 2^{\mathbb{N}_0}$ be an IASI on $G$. For any vertex $v$ of $G$, if $f(v)$ is an AP-set, then $f$ is called a {\em vertex-arithmetic IASI} of $G$. A graph that admits a vertex-arithmetic IASI is called a {\em vertex-arithmetic IASI graph}.}
\end{definition}

\begin{definition}{\rm
For an IASI $f$ of $G$, if $f^+(e)$ is an AP-set, for all $e\in E(G)$, then $f$ is called an {\em edge-arithmetic IASI} of $G$. A graph that admits an edge-arithmetic IASI is called an {\em edge-arithmetic IASI graph}.}
\end{definition}

\begin{definition}{\rm
An IASI is said to be an {\em arithmetic integer additive set-indexer} if it is both vertex-arithmetic and edge-arithmetic. That is, an arithmetic IASI is an IASI $f$, under which the set-labels of all elements of a given graph $G$ are AP-sets. A graph that admits an arithmetic IASI is called an {\em arithmetic IASI graph}. }
\end{definition}

\begin{definition}{\rm
If all the set-labels of all vertices of a graph $G$ under an IASI,say $f$, are AP-sets and the set-labels of edges are not AP-sets,  then $f$ is called a {\em semi-arithmetic IASI}.}
\end{definition}

\begin{proposition}\label{P-AIASI-d}
Let $f$ be a vertex-arithmetic IASI defined on $G$. If the set-labels of vertices of $G$ are AP-sets with the same common difference $d$, then $f$ is also an edge-arithmetic IASI of $G$.
\end{proposition}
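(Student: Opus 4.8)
The plan is to reduce the statement to the elementary fact that the sumset of two finite arithmetic progressions sharing the same common difference is again an arithmetic progression with that common difference. First I would fix an arbitrary edge $uv\in E(G)$ and write the vertex set-labels explicitly as $f(u)=\{a+id:0\le i\le m-1\}$ and $f(v)=\{b+jd:0\le j\le n-1\}$, where $m=|f(u)|$, $n=|f(v)|$, and $d$ is the common deterministic index shared by all vertices of $G$. Here $m,n\ge 3$, since by the standing convention every AP-set in this paper has at least three elements.

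Next I would compute directly
\[
f^{+}(uv)=f(u)+f(v)=\{(a+b)+(i+j)d:0\le i\le m-1,\ 0\le j\le n-1\}.
\]
The key observation is that, as $(i,j)$ ranges over this index rectangle, the integer $i+j$ attains precisely the values $0,1,\ldots,m+n-2$, with no gaps: incrementing $i$ or $j$ by one changes $i+j$ by one, and the extreme values $0$ and $m+n-2$ are realised by $(0,0)$ and $(m-1,n-1)$ respectively. Consequently $f^{+}(uv)=\{(a+b)+kd:0\le k\le m+n-2\}$, which is an AP-set with common difference $d$ and cardinality $m+n-1\ge 3$. Since $uv$ was arbitrary, every edge of $G$ receives an AP-set label, so $f$ is edge-arithmetic; as a by-product the deterministic index of each edge equals $d$ as well. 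Note that injectivity of $f^{+}$ is not in question, being already part of the hypothesis that $f$ is an IASI.

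I do not expect a genuine obstacle here; the single point deserving a line of care is the verification that $\{i+j:0\le i\le m-1,\ 0\le j\le n-1\}$ is the contiguous block $\{0,1,\ldots,m+n-2\}$ rather than a set with holes, which is exactly the gap-freeness argument given above. Everything else is bookkeeping with the explicit parametrisations, and the cardinality bound $m+n-1\ge 3$ needed to legitimately call $f^{+}(uv)$ an AP-set is automatic from $m,n\ge 3$.
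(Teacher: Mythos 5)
Your proof is correct and follows essentially the same route as the paper: parametrise both set-labels as arithmetic progressions with common difference $d$ and compute the sumset $\{(a+b)+(i+j)d\}$ directly. The one point where you go further is the explicit verification that $\{i+j : 0\le i\le m-1,\ 0\le j\le n-1\}$ is the gap-free block $\{0,1,\ldots,m+n-2\}$, a step the paper asserts implicitly; including it is a genuine improvement in rigour but not a different argument.
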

\begin{proof}
Let $u$ and $v$ be two adjacent vertices in $G$. Let $f(u)=\{a+id:0\le i \le m\}$ and $f(v)=\{b+jd:0\le j \le n$, where $a, b, m, n$ are non-negative integers. Then, $f^+(uv)=\{(a+b)+(i+j)d:0\le i \le m, 0\le j \le n\}$. That is, $f$ is edge-arithmetic.
\end{proof}

Let $G$ be a graph whose elements have different deterministic indices. Then, the following proposition establishes a necessary and sufficient condition for a vertex-arithmetic IASI of $G$ to be an arithmetic IASI of $G$. 

\begin{proposition}\label{P-AISAI-kd}
Let $f$ be a vertex arithmetic IASI of the graph $G$ such that deterministic indices of any two adjacent vertices in $G$ are distinct.  Then, $f$ is edge-arithmetic if and only if for any pair of adjacent vertices in $G$, the deterministic index of one is a positive integral multiple of the deterministic index of the other and this positive integer is less than or equal to the cardinality of the set-label of the latter.
\end{proposition}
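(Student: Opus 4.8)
The plan is to reduce the statement to a single edge and then analyse the relevant sumset directly. Since $f$ is edge-arithmetic precisely when $f^{+}(e)$ is an AP-set for every edge $e$, it is enough to fix one edge $uv$ of $G$ and decide when $f^{+}(uv)$ is an AP-set. As the deterministic indices of $u$ and $v$ are distinct, I may write $f(u)=\{a+id_1:0\le i\le m\}$ and $f(v)=\{b+jd_2:0\le j\le n\}$ with $d_1<d_2$, so that $|f(u)|=m+1$, $|f(v)|=n+1$; also $m,n\ge 2$ since every set-label, being an AP-set, has at least three elements. Then $f^{+}(uv)=\{(a+b)+id_1+jd_2:0\le i\le m,\ 0\le j\le n\}$, and since a set is an AP-set if and only if any translate of it is, the question becomes: for which $d_1,d_2,m,n$ is
\[
S:=\{\,id_1+jd_2:0\le i\le m,\ 0\le j\le n\,\}
\]
an AP-set? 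The target is that this holds exactly when $d_2$ is a positive integral multiple of $d_1$, say $d_2=q\,d_1$, with $q\le m+1=|f(u)|$.

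For the ``only if'' direction I would argue as follows. Assume $S$ is an AP-set. Observe that $0\in S$, $d_1\in S$, and $d_1$ is the smallest positive element of $S$, since any element involving a term $jd_2$ with $j\ge 1$ is at least $d_2>d_1$. Hence the common difference of $S$ must be $d_1$, so every element of $S$ is a multiple of $d_1$; as $d_2\in S$, this forces $d_1\mid d_2$, say $d_2=q\,d_1$ with $q\ge 2$. It then remains to bound $q$: if $q\ge m+2$, then $md_1$ and $qd_1=d_2$ both lie in $S$, yet the intervening multiple $(m+1)d_1$ admits no representation $id_1+jd_2$ with $0\le i\le m$ and $0\le j\le n$, contradicting that $S$ is an arithmetic progression of common difference $d_1$. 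So $q\le m+1$.

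For the ``if'' direction, suppose $d_2=q\,d_1$ with $2\le q\le m+1$. Then $S=d_1\cdot T$, where $T=\{\,i+jq:0\le i\le m,\ 0\le j\le n\,\}$, so it suffices to show $T=\{0,1,\dots,m+nq\}$. I would prove this by induction on $n$: the case $n=0$ is immediate, and the step from $n$ to $n+1$ adjoins the block $\{(n+1)q,\dots,(n+1)q+m\}$ to the already-constructed interval $\{0,\dots,m+nq\}$; these two sets combine into the interval $\{0,\dots,m+(n+1)q\}$ exactly because $(n+1)q\le (m+nq)+1$, which is precisely the hypothesis $q\le m+1$. Hence $S$, and therefore $f^{+}(uv)$, is an AP-set. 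Quantifying over all edges of $G$ gives the proposition. The main obstacle is this last combinatorial step: recognising that a union of copies of the interval $\{0,\dots,m\}$ shifted by successive multiples of $q$ is again an interval exactly when consecutive copies leave no gap, i.e. when $q\le m+1$. The necessity direction is comparatively routine once $d_1$ is identified as the forced common difference of $S$.
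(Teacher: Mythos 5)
Your proof is correct and follows essentially the same route as the paper: both arguments view $f^{+}(uv)$ as a union of translates of one set-label by the elements of the other and check when these translated blocks abut or overlap to form a single arithmetic progression, with the threshold $q\le m+1=|f(u)|$ emerging in exactly the same way. Your necessity argument (identifying $d_1$ as the forced common difference via the smallest positive element of $S$, deducing $d_1\mid d_2$, then exhibiting the missing multiple $(m+1)d_1$) is a tidier packaging of the paper's two-case contrapositive, but the underlying combinatorics is identical.
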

\begin{proof}
Let $f:V(G)\to 2^{\mathbb{N}_0}$ be a vertex arithmetic IASI on $G$ such that $f(v_i)$ and $f(v_j)$ have the deterministic indices $d_i$ and $d_j$ respectively, where $d_i\neq d_j$.  Assume that $A=f(v_i)=\{a_r = a+rd_i:0 \le r <m\}$ and $B=f(v_j)=\{b_s=b+sd_i:0\le s <n\}$. Then, $|f(v_i)|=m$ and $|f(v_j)|=n$. Now, arrange the terms of $A+B=f^+(v_iv_j)$ in rows and columns as follows. For $b_s\in B, 0\le s<n$, arrange the terms of $A+b_s$ in $(s+1)$-th row in such a way that equal terms of different rows come in the same column of this arrangement.

Assume, without loss of generality, that $d_i=kd_j$ and $k\le m$. If $k<m$, then for any $a\in f(v_i)$ and $b\in f(v_j)$ we have $a+(b+d_j)= a+b+kd_i< a+b+md_i$. That is, a few final elements of each row of the above arrangement occur as the initial elements of the succeeding row (or rows) and the difference between two successive elements in each row is $d_i$ itself. If $k=m$, then the the difference between the final element of each row and the first element of the next row is $d_i$ and the difference between two consecutive elements in each row is $d_i$. Hence, if $k\le m$, then  $f^+(v_iv_j)$ is an AP-set with difference $d_i$.  Therefore, $f$ is an arithmetic IASI.

We prove the converse part in two cases.

\noindent {\em Case-1:} Let $d_j=kd_i$ where $k>m$. Then, the difference between two successive elements in each row is $d_i$, but the the difference between the final element of each row and the first element of the next row is $td_i$, where $t=k-m+1\neq 1$.  Hence, $f$ is not an arithmetic IASI.

\noindent {\em Case-2:}Assume that $d_j$ is not a multiple of $d_i$ (or $d_i$ is not a multiple of $d_j$). Without loss generality, let $d_i<d_j$. Then, by division algorithm, $d_j=pd_i+q, 0<q<d_i$. Then, the difference between any two consecutive terms in $f^+(v_iv_j)$ are not equal. Hence, $f$ is not an arithmetic IASI. This completes the proof.
\end{proof}

Due to Proposition \ref{P-AIASI-d} and Proposition \ref{P-AISAI-kd}, we have 

\begin{theorem}\label{T-AIASI-g}
A graph $G$ admits an arithmetic IASI graph $G$ if and only if for any two adjacent vertices in $G$, the deterministic index of one vertex is a positive integral multiple of the deterministic index of the other vertex and this positive integer is less than or equal to the cardinality of the set-label of the latter vertex.
\end{theorem}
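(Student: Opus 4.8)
The plan is to deduce the theorem directly from Propositions~\ref{P-AIASI-d} and~\ref{P-AISAI-kd} by a case analysis on the deterministic indices of the two end vertices of an arbitrary edge. Since, by definition, an arithmetic IASI is precisely a vertex-arithmetic IASI that is also edge-arithmetic, it suffices to characterise when a vertex-arithmetic IASI $f$ of $G$ is edge-arithmetic; and since $f^+$ is computed edge by edge, it is enough to decide, for each edge $uv$, when $f^+(uv)$ is an AP-set in terms of the deterministic indices $d_u$ and $d_v$ of $f(u)$ and $f(v)$.

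First I would dispose of the edges $uv$ with $d_u=d_v$. Inspecting the proof of Proposition~\ref{P-AIASI-d}, one sees that it only uses that the two adjacent vertices carry AP-set labels with a common difference, so it applies to a single such edge and gives that $f^+(uv)$ is an AP-set. Moreover, in this situation the condition in the statement is automatically met: one may take the ``positive integral multiple'' to be $1$, and $1\le|f(v)|$ holds because every set-label is a non-empty set (indeed of cardinality at least three, since its elements are in arithmetic progression). Hence for such edges the equivalence ``$f^+(uv)$ is an AP-set $\iff$ the index condition holds'' is valid, both sides being true.

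Next I would treat the edges $uv$ with $d_u\ne d_v$. Here Proposition~\ref{P-AISAI-kd} applies verbatim and yields that $f^+(uv)$ is an AP-set if and only if one of $d_u,d_v$ is a positive integral multiple of the other and the corresponding multiplier does not exceed the cardinality of the set-label of the vertex with the smaller deterministic index, which is the ``latter'' vertex in the statement. Combining the two cases, $f$ is edge-arithmetic, i.e.\ $f^+(e)$ is an AP-set for every $e\in E(G)$, if and only if the stated divisibility-and-bound condition holds for every edge of $G$. Since $f$ was an arbitrary vertex-arithmetic IASI, this is exactly the assertion of the theorem.

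The only point needing real care is the bookkeeping in the equal-index case: one must note that Proposition~\ref{P-AIASI-d} can legitimately be invoked edge-locally rather than only globally, and that its hypothesis is subsumed by the ``multiplier equals $1$'' instance of the general condition, so that the two propositions genuinely patch together into one clean equivalence instead of leaving a gap between the ``same index'' and ``distinct indices'' regimes. No further computation is required beyond what is already carried out in those propositions.
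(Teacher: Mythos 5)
Your proposal is correct and takes essentially the same route as the paper: the paper derives Theorem~\ref{T-AIASI-g} precisely by combining Propositions~\ref{P-AIASI-d} and~\ref{P-AISAI-kd}, offering no further argument. Your edge-by-edge case analysis (equal indices handled by Proposition~\ref{P-AIASI-d}, read as the multiplier-equals-$1$ instance of the condition, and distinct indices handled by Proposition~\ref{P-AISAI-kd}) simply makes explicit the patching that the paper leaves implicit.
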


\begin{proposition}\label{P-AIASI-GJ4}
Let $f$ be an IASI on $G$. Then, $f$ is edge-arithmetic implies it is vertex-arithmetic.
\end{proposition}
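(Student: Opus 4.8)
The plan is to localize the claim to a single edge and then reduce it to a purely set-theoretic statement about sumsets. Since every graph here is assumed to have no isolated vertices, each vertex $v$ of $G$ is an end of some edge $uv$, and edge-arithmeticity of $f$ says that $f^{+}(uv)=f(u)+f(v)$ is an AP-set. Hence it suffices to prove the following: \emph{if $A$ and $B$ are non-empty finite sets of non-negative integers whose sumset $A+B$ is an AP-set, then $A$ and $B$ are both AP-sets.} Applying this with $A=f(u)$ and $B=f(v)$ gives that $f(v)$ is an AP-set, and since $v$ is arbitrary, $f$ is vertex-arithmetic.

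For the sumset statement I would use the row-and-column display of $A+B$ already used in the proof of Proposition \ref{P-AISAI-kd}. Write $A=\{a_{0}<a_{1}<\dots<a_{m-1}\}$ and $B=\{b_{0}<b_{1}<\dots<b_{n-1}\}$, let $\delta$ be the common difference of $A+B$, and arrange $A+B$ so that the $(s+1)$-th row is $A+b_{s}$, aligning equal entries in the same column. The smallest entry is $a_{0}+b_{0}$, the largest is $a_{m-1}+b_{n-1}$, and every integer $a_{0}+b_{0}+t\delta$ between them must appear. The first step is to read off the initial gaps: the term $a_{0}+b_{0}+\delta$ is either $a_{1}+b_{0}$ or $a_{0}+b_{1}$, so $a_{1}-a_{0}=\delta$ or $b_{1}-b_{0}=\delta$. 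One then wants to propagate this, deducing from the successive terms of the progression that every gap $a_{r+1}-a_{r}$ and $b_{s+1}-b_{s}$ is $\delta$ (or an appropriate multiple of $\delta$), so that the resulting picture matches the block structure in Theorem \ref{T-AIASI-g}.

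The step I expect to be the main obstacle is precisely this propagation. Knowing only that $A+B$ fills an arithmetic progression does not by itself separate the arithmetic behaviour of $A$ from that of $B$, since an irregular gap in one summand could be masked by how the entries coming from the other summand land in the array; ruling this out --- and making explicit which additional condition forbids it, such as the extremal condition $|A+B|=|A|+|B|-1$ under which $A$ and $B$ are forced to be arithmetic progressions with a common difference --- is the heart of the matter. I would attack it by processing the entries of $A+B$ in increasing order: having shown that an initial segment of $A$ and one of $B$ are already arithmetic progressions with the correct differences, I would look at the smallest entry of $A+B$ not yet produced by those segments and argue that it can only be obtained by extending one of the two segments by one further term with the forced gap. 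The base case and the bookkeeping between the two index directions are where the delicate work would be concentrated.
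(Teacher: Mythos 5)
Your reduction of the proposition to a single sumset statement --- if $A+B$ is an AP-set then $A$ and $B$ are AP-sets --- is the right way to frame the problem, but the propagation step you flag as the main obstacle is not merely delicate: it is impossible, because the sumset statement is false without an extra hypothesis. Take $A=\{0,1,3\}$ and $B=\{0,1,2,3\}$. Then $A+B=\{0,1,2,3,4,5,6\}$ is an AP-set with common difference $1$, yet $A$ is not an AP-set. Labelling the two ends of a single edge by these sets (translating one of them to keep $f$ injective) produces an edge-arithmetic IASI that is not vertex-arithmetic, so the masking phenomenon you describe --- an irregular gap in one summand hidden by the entries contributed by the other --- genuinely occurs. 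The extremal condition you mention in passing, $|A+B|=|A|+|B|-1$, is exactly the missing ingredient: under that hypothesis the inverse theorem for sumsets does force both summands to be arithmetic progressions with a common difference. But nothing in the definition of an edge-arithmetic IASI supplies that cardinality condition (in the example above $|A+B|=7>6=|A|+|B|-1$ fails in the other direction, i.e., the sumset is not extremal), so your plan cannot be completed as stated.

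For comparison, the paper's own argument for Proposition \ref{P-AIASI-GJ4} asserts that since $A+B$ is an AP-set, Proposition \ref{P-AIASI-d} or Proposition \ref{P-AISAI-kd} yields $d_i=d_j$ or $d_j=kd_i$; but both of those results run in the opposite direction (from hypotheses on the vertex labels to a conclusion about the edge label), so they cannot be invoked here, and the paper's proof has the same hole that you correctly identified and honestly left open. The constructive part of your write-up (reading off $a_1-a_0=\delta$ or $b_1-b_0=\delta$ from the second-smallest element of $A+B$) is sound as far as it goes, but the induction cannot be pushed past the first step without an additional hypothesis such as the extremal sumset condition.
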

\begin{proof}
Let $f$ be an edge-arithmetic IASI on $G$. Then, $f^+(e)$ is an AP-set, for all $e\in E(G)$. If possible, let $f$ is not vertex-arithmetic. Hence, for two adjacent vertices $v_i$ and $v_j$ of $G$, let $A=f(v_i)$ and $B=f(v_j)$. Let $D_A$ and $D_B$ be the sets of all differences between the elements of $A$ and $B$ respectively. For any $d_i$ in $D_A$, there exists two elements $a_r$ and $b_s$ in $A+B$ such that $a_r-b_s=d_i$. Similarly, for any $d_j$ in $D_B$, there exists two elements $a_k$ and $b_l$ in $A+B$ such that $a_k-b_l=d_j$. Since $A+B$ is an AP-set, by Proposition \ref{P-AIASI-d}, $d_i=d_j$ or by Proposition \ref{P-AISAI-kd}, $d_j=kd_i, 1< k\le |f(v_i)|$. In both cases, $f(v_i)$ and $f(v_j)$ are AP-sets, a contradiction to the assumption that $f$ is not a vertex-arithmetic IASI.
Hence, every edge-arithmetic IASI is always a vertex-arithmetic IASI. 
\end{proof}

\begin{remark}{\rm
In view of Proposition \ref{P-AIASI-GJ4}, we note that all arithmetic (edge-arithmetic) IASI graphs are vertex arithmetic. But, a vertex-arithmetic IASI of graph $G$ need not be an edge-arithmetic IASI of $G$.}
\end{remark}

In the following theorem, we establish a relation between the deterministic indices of the elements of an arithmetic IASI graph $G$.

\begin{theorem}\label{T-AIASI-GJ5}
If $G$ is an arithmetic IASI graph, the greatest common divisor of the deterministic indices of vertices of $G$ and the greatest common divisor of the deterministic indices of the edges of $G$ are equal to the smallest among the deterministic indices of the vertices of $G$.
\end{theorem}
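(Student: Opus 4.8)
The plan is to deduce everything from Theorem~\ref{T-AIASI-g} and the explicit sumset computations in the proofs of Propositions~\ref{P-AIASI-d} and~\ref{P-AISAI-kd}. Write $d_v$ for the deterministic index of a vertex $v$ and set $d_{\min}=\min\{d_v:v\in V(G)\}$, attained at some vertex $v_0$. The first step is to determine the deterministic index of an arbitrary edge $uv$: by Theorem~\ref{T-AIASI-g} either $d_u=d_v$ or one of them is a proper integral multiple of the other, and one checks, as in the proofs of Propositions~\ref{P-AIASI-d} and~\ref{P-AISAI-kd}, that in both cases $f(u)+f(v)$ is an AP-set whose common difference is $\min(d_u,d_v)$. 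Hence every edge $uv$ has deterministic index exactly $\min(d_u,d_v)$; in particular, since $G$ has no isolated vertices, any edge incident with $v_0$ has deterministic index $d_{\min}$, so $d_{\min}$ occurs both as a vertex index and as an edge index.

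Next I would prove that $d_{\min}$ divides $d_v$ for every vertex $v$. For a neighbour $w$ of $v_0$, Theorem~\ref{T-AIASI-g} gives $d_{v_0}\mid d_w$ or $d_w\mid d_{v_0}$, and the second option together with the minimality of $d_{v_0}$ forces $d_w=d_{v_0}$; either way $d_{\min}\mid d_w$. The aim is then to propagate this along a shortest path from $v_0$ to an arbitrary vertex $v$, which is where connectedness of $G$ enters, obtaining $d_{\min}\mid d_v$ and hence $\gcd\{d_v:v\in V(G)\}=d_{\min}$, since $d_{\min}$ both divides every vertex index and is one of them. I expect this propagation to be the main obstacle: the divisibility relations coming out of Theorem~\ref{T-AIASI-g} live on single edges, and ``$a\mid b$ and ($b\mid c$ or $c\mid b$)'' does not in general yield ``$a\mid c$''. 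So the inductive step has to be handled carefully---presumably by also recording, along the path, which endpoint of each edge carries the smaller index---and it is worth checking whether some extra hypothesis on $G$ is needed for the argument to close.

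For the edge statement, every edge index equals $\min(d_u,d_v)$ for its endpoints $u,v$, hence is a multiple of $d_{\min}$ by the previous paragraph; and at least one edge, namely any edge at $v_0$, has deterministic index exactly $d_{\min}$. Therefore $d_{\min}$ divides every edge index and is itself an edge index, so the greatest common divisor of the edge deterministic indices is $d_{\min}$ as well, which completes the proof.
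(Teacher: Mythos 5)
Your opening step --- that every edge $uv$ receives deterministic index $\min(d_u,d_v)$, so that $d_{\min}$ occurs among both the vertex indices and the edge indices --- is exactly what the paper extracts from Propositions~\ref{P-AIASI-d} and~\ref{P-AISAI-kd} (see also Theorem~\ref{T-AIASI1}), and that part is sound. The problem is that your proposal stops precisely where the theorem needs an argument: you never establish that $d_{\min}$ divides the deterministic index of every vertex, and you yourself flag the propagation along a path as an unresolved obstacle. That is a genuine gap, not a detail to be checked later. The paper's own proof consists essentially of carrying out this propagation: taking a vertex $v_i$ of minimal index $d_i$ adjacent to $v_j$ with $d_j=k\,d_i$, it asserts that any further vertex $v_r$ adjacent to $v_j$ has index $d_i$, $d_j$ or $l\,d_j$, hence a multiple of $d_i$, and iterates through the graph; the edge statement then follows because each edge carries the smaller of its endpoints' indices, just as in your last paragraph.

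Your instinct about why the induction is delicate is correct, and it is exactly the thin point of the published argument. Theorem~\ref{T-AIASI-g} only guarantees that for adjacent vertices one index is an integral multiple of the other, so besides the cases the paper lists there is the case $d_j=l\,d_r$, in which $d_r$ divides $d_j$ but need not be a multiple of $d_{\min}$. Concretely, on a path $v_0v_jv_r$ take $f(v_0)=\{0,2,4,6\}$, $f(v_j)=\{1,7,13\}$, $f(v_r)=\{0,3,6,9\}$: both edges satisfy the condition of Theorem~\ref{T-AIASI-g} and both edge set-labels are AP-sets (with differences $2$ and $3$ respectively), yet the vertex indices $2,6,3$ have greatest common divisor $1$, which is smaller than $d_{\min}=2$. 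So the divisibility you need cannot be deduced from Theorem~\ref{T-AIASI-g} alone by any bookkeeping along the path; it holds only under additional structural assumptions (for instance the situation of Theorem~\ref{T-AIASI-Kn}, where every non-minimal index is forced to be the same multiple $k\,d$ of the minimum). As submitted, your proposal therefore does not prove the statement: the central claim that $d_{\min}$ divides every vertex index is left open, and the obstacle you identified is real rather than a matter of handling the induction more carefully.
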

\begin{proof}
Let $f$ be an arithmetic IASI of $G$. Then, by Theorem \ref{T-AIASI-g}, for any two adjacent vertices $v_i$ and $v_j$ of $G$ with deterministic indices $d_i$ and $d_j$ respectively, either $d_i=d_j$, or if $d_j>d_i, d_j=k\, d_j$,  where $k$ is a positive integer such that $1<k\le |f(v_i)|$.

If the deterministic indices of the elements of $G$ are the same, the result is obvious. Hence, assume that for any two adjacent vertices $v_i$ and $v_j$ of $G$,  $d_j=k\, d_j, k\le |f(v_i)|$, where $d_i$ is the smallest among the deterministic indices of the vertices of $G$.  If $v_r$ is another vertex that is adjacent to $v_j$, then it has the deterministic index $d_r$ which is equal to either $d_i$ or $d_j$ or $l\,d_j$. In all the three cases, $d_r$ is a multiple of $d_i$. Hence, the g.c.d of $d_i, d_j, d_r$ is $d_i$. Proceeding like this, we have the g.c.d of the deterministic indices of the vertices of $G$ is $d_i$.

Also, by Theorem \ref{P-AISAI-kd}, the edge $u_iv_j$ has the deterministic index $d_i$. The edge $v_jv_k$ has the deterministic index $d_i$, if $d_k=d_i$, or $d_j$ in the other two cases. Proceeding like this, we observe that the g.c.d of the deterministic indices of the edges of $G$ is also $d_i$. This completes the proof.
\end{proof}

\begin{theorem}\label{T-AIASI1}
Let $G$ be a graph which admits an arithmetic IASI, say $f$ and let $d_i$ and $d_j$ be the deterministic indices of two adjacent vertices $v_i$ and $v_j$ in $G$, where $d_i<d_j$. Then, for some positive integer $1\le k\le |f(v_i)|$ , the edge $v_iv_j$ has the set-indexing number $|f(v_i)|+k(|f(v_j)|-1)$. 
\end{theorem}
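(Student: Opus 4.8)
The plan is to first use Theorem~\ref{T-AIASI-g} to identify the integer $k$, and then to carry out the interval-counting already sketched in the proof of Proposition~\ref{P-AISAI-kd}. Since $f$ is an arithmetic IASI and $v_iv_j$ is an edge with $d_i<d_j$, Theorem~\ref{T-AIASI-g} forces $d_j=k\,d_i$ for some positive integer $k$ with $1\le k\le |f(v_i)|$ (in fact $k\ge 2$, since $d_i\neq d_j$); this is precisely the $k$ appearing in the statement.

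Next I would write the set-labels explicitly. Let $m=|f(v_i)|$ and $n=|f(v_j)|$, and write $f(v_i)=\{a+r\,d_i:0\le r\le m-1\}$ and $f(v_j)=\{b+s\,d_j:0\le s\le n-1\}=\{b+s\,k\,d_i:0\le s\le n-1\}$, where $a,b$ are non-negative integers. Then every element of $f^+(v_iv_j)=f(v_i)+f(v_j)$ has the form $(a+b)+(r+sk)d_i$, so the set-indexing number $|f^+(v_iv_j)|$ equals the number of distinct integers of the form $r+sk$ with $0\le r\le m-1$ and $0\le s\le n-1$.

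The core step is the count of these coefficients. For a fixed $s$, the values $r+sk$ form the block of consecutive integers $\{sk,sk+1,\dots,sk+m-1\}$. Because $k\le m$, the block corresponding to $s+1$ begins at $(s+1)k=sk+k\le sk+m$, that is, at or immediately after the last term $sk+m-1$ of the block for $s$; hence the blocks for $s=0,1,\dots,n-1$ chain together with no gaps, and their union is exactly the set of all integers from $0$ to $(m-1)+(n-1)k$. Consequently the number of distinct coefficients is $(m-1)+(n-1)k+1=m+k(n-1)$, which is $|f(v_i)|+k(|f(v_j)|-1)$, as required.

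I expect no serious obstacle here: the contiguity of the blocks under the hypothesis $k\le|f(v_i)|$ is essentially the observation already used in proving Proposition~\ref{P-AISAI-kd}, and all that remains is careful bookkeeping of the extreme coefficients $0$ and $(m-1)+(n-1)k$ of the union. The only subtlety worth remarking is that distinctness of $d_i$ and $d_j$ ensures $k\ge 2$, so the formula genuinely records a nontrivial multiple; the degenerate case $d_i=d_j$ (with $k=1$) is the one already handled by Proposition~\ref{P-AIASI-d}, and the formula $|f(v_i)|+(|f(v_j)|-1)$ it yields is still consistent.
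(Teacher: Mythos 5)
Your proposal is correct and follows essentially the same route as the paper: write both set-labels as arithmetic progressions, substitute $d_j=k\,d_i$, and count the distinct sums $(a+b)+(r+sk)d_i$ to get $m+k(n-1)$. The only difference is that you spell out the block-chaining argument (consecutive blocks $\{sk,\dots,sk+m-1\}$ overlapping or abutting because $k\le m$) which the paper simply asserts by writing the sumset as a full arithmetic progression, so your version is, if anything, slightly more complete.
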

\begin{proof}
Let $f$ be an arithmetic IASI defined on $G$. For any two vertices $v_i$ and $v_j$ of $G$, let $f(v_i)=\{a_i, a_i+d_i,a_i+2d_i,a_i+3d_i,\ldots,a_i+(m-1)d_i\}$ and let $f(v_j)=\{a_j, a_j+d_j,a_j+2d_j,a_j+3d_j,\ldots,a_j+(n-1)d_j\}$.

Let $d_j=k.d_i$, where $k$ is a positive integer such that $1\le k\le |f(v_i)|$. Then, $f(v_j)=\{a_j, a_j+kd_i,a_j+2kd_i,a_j+3kd_i,\ldots,a_j+(n-1)kd_i\}$. Therefore, $f^{+}(v_iv_j)=\{a_i+a_j, a_i+a_j+d_i,a_i+a_j+2d_i,\ldots,a_i+a_j+[(m-1)+k(n-1)]d_i\}$. That is, the set-indexing number of the edge $v_iv_j$ is $m+k(n-1)$. 
\end{proof}

\section{Arithmetic IASIs on Some Graph Classes}

\begin{theorem}
Every graph $G$ admits an arithmetic integer additive set-indexer.
\end{theorem}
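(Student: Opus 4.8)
The plan is to construct, for an arbitrary finite graph $G$, an explicit arithmetic IASI. The simplest strategy is to give every vertex a set-label which is an AP-set with \emph{one and the same} common difference; by Proposition \ref{P-AIASI-d} such a vertex-arithmetic IASI is automatically edge-arithmetic, and hence arithmetic. So the whole problem reduces to producing a vertex labelling by AP-sets of a fixed common difference that is genuinely an IASI, i.e., for which $f$ and the induced $f^{+}$ are both injective. Since an AP-set must have at least three elements, singleton labels are unavailable, so the natural candidates are blocks of three consecutive integers.

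Concretely, I would enumerate $V(G)=\{v_1,\dots,v_n\}$ and set $f(v_i)=\{c_i,\,c_i+1,\,c_i+2\}$, where $c_1<c_2<\dots<c_n$ is a Sidon set of non-negative integers (for instance $c_i=2^{i}$, or the greedy sequence in which each new term exceeds all previous pairwise sums). Each $f(v_i)$ is a $3$-term AP-set of common difference $1$, and distinctness of the $c_i$ makes $f$ injective. For an edge $v_iv_j$ we get $f^{+}(v_iv_j)=\{c_i+c_j,\,c_i+c_j+1,\dots,c_i+c_j+4\}$, a block of five consecutive integers determined solely by $c_i+c_j$; the Sidon property yields distinct sums $c_i+c_j$ for distinct pairs $\{i,j\}$, so $f^{+}$ is injective on $E(G)$. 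Thus $f$ is an IASI, vertex-arithmetic by construction and edge-arithmetic by Proposition \ref{P-AIASI-d}, hence an arithmetic IASI of $G$.

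The only delicate point is the injectivity of $f^{+}$, and this is precisely what the distinct-pairwise-sums (Sidon) property of $\{c_i\}$ secures; the remaining checks --- that each label is an AP-set, that $f$ is injective, and that the edge labels are AP-sets --- are immediate because $G$ is finite. Alternatively, one can read the conclusion off Theorem \ref{T-AIASI-g}: assigning every vertex the deterministic index $1$ makes the required multiplicity condition hold with multiplier $k=1\le |f(v)|$ for every vertex, so the above labelling meets the characterisation directly.
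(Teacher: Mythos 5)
Your construction is correct, and it takes a cleaner route than the paper. The paper's proof labels the vertices greedily, allowing adjacent vertices to receive \emph{different} deterministic indices subject to the multiplicative condition of Theorem \ref{T-AIASI-g}; it then asserts that the edge labels are AP-sets and stops there, never addressing the injectivity of $f$ or of $f^{+}$ (and the greedy step is itself delicate when a new vertex is adjacent to several already-labelled vertices whose indices are not all comparable). You instead collapse to the simplest admissible case --- every vertex gets a $3$-term AP-set of common difference $1$ --- so that Proposition \ref{P-AIASI-d} alone guarantees the edge labels are AP-sets, and you then do the one piece of work the paper omits: choosing the base points $c_i$ to form a Sidon set so that the sums $c_i+c_j$ are pairwise distinct and $f^{+}$ is injective. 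What your approach buys is a complete, verifiable proof of the existence claim; what it gives up is the (illusory, in the paper's version) extra generality of exhibiting arithmetic IASIs with non-constant deterministic indices, which is not needed for the statement. One small remark: injectivity of $f^{+}$ only requires that $c_i+c_j$ determine the unordered pair $\{i,j\}$ among \emph{edges} of $G$, so a Sidon set is more than enough, but it is the simplest uniform choice and costs nothing for a finite graph.
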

\begin{proof}
Let $f$ be an IASI defined on a given graph $G$ under which all the vertices of $G$ are labeled by AP-sets in such a way that the common difference $f(v_i)$ is $d_i$, where $d_i$ is a positive integer. Let $v_1$ be an arbitrary vertex of $G$ and let $d_1$ be any positive integer. Label $v_1$ by an AP-set with common difference $d_1$. Let $v_2$ be a vertex of $G$ adjacent to $v_1$. Label this vertex by an AP-set with common difference $d_2=k_1\,d_1, 1\le k_1 \le |f(v_1)|$. Let $v_3$ be a vertex of $G$ adjacent to $v_2$. Label this vertex by an AP-set with common difference $d_3=k_2\,d_2, 1\le k_1 \le |f(v_1)|$. If $v_3$ is adjacent to $v_1$, then $d_3=k_3\,d_1, 1\le k_3 \le min(|f(v_1)|,\,|f(v_2)|)$. Label all vertices of $G$ in this manner. Then, by Theorem, the set-label of every edge of $G$ is also an AP-set.
Hence, $f$ is arithmetic IASI of $G$.
\end{proof}

The following theorem establishes the necessary and sufficient condition for a complete graph to admit an arithmetic IASI.

\begin{theorem}\label{T-AIASI-Kn}
A complete graph $K_n$ admits an arithmetic IASI if and only if its vertex set can be partitioned in to at most two sets such that every vertex in the same partition has the same deterministic index.
\end{theorem}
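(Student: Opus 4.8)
The plan is to turn the problem into a purely local question about deterministic indices by invoking Theorem~\ref{T-AIASI-g}: an arithmetic IASI is exactly a vertex-arithmetic labeling for which, across each edge, the larger deterministic index is a positive integral multiple of the smaller one, the multiple being at most the cardinality of the set-label carrying the smaller index. In $K_n$ every two vertices are adjacent, so this condition must hold \emph{simultaneously} for every pair of vertices; in particular the set of deterministic indices appearing on $V(K_n)$ must be totally ordered by divisibility, and for the vertex $u$ realizing the smallest deterministic index $d_u$, every other deterministic index is a multiple $k\,d_u$ with $k\le|f(u)|$.

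For sufficiency, suppose $V(K_n)$ splits into at most two classes on which the deterministic index is constant, with values $d_1\le d_2$. Put $d_2=k\,d_1$ (the divisibility being forced by Proposition~\ref{P-AISAI-kd} once both classes are non-empty and the labeling is to be edge-arithmetic). I would label every vertex of the $d_1$-class by an AP-set of cardinality at least $\max\{3,k\}$ and common difference $d_1$, every vertex of the $d_2$-class by an AP-set of cardinality at least $3$ and common difference $d_2$, and choose the initial terms far apart so that $f$ is injective and the sumsets $f^{+}(e)$ are pairwise distinct. Every monochromatic pair then has equal deterministic indices and every bichromatic pair has indices $d_1$ and $k\,d_1$ with $k$ at most the set-indexing number of its $d_1$-endpoint; by Theorem~\ref{T-AIASI-g} (equivalently Propositions~\ref{P-AIASI-d} and \ref{P-AISAI-kd}) $f$ is an arithmetic IASI. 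When there is only one class this is immediate from Proposition~\ref{P-AIASI-d} with all deterministic indices equal, so $K_n$ always admits at least one such labeling.

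For necessity, let $f$ be an arithmetic IASI of $K_n$ and assume, towards a contradiction, that three vertices $u,v,w$ carry pairwise distinct deterministic indices $d_u<d_v<d_w$. Applying Theorem~\ref{T-AIASI-g} to the edges $uv$, $uw$, $vw$ yields integers $a,b,c\ge 2$ with $d_v=a\,d_u$, $d_w=b\,d_u$, $d_w=c\,d_v$, hence $b=ac$, together with $a\le|f(u)|$, $b\le|f(u)|$ and $c\le|f(v)|$. The remaining task is to derive a contradiction from these relations, and I expect this to be the main obstacle: the bare arithmetic $a,c\ge2$, $b=ac\le|f(u)|$, $c\le|f(v)|$ is in itself satisfiable, so the argument must use more than Theorem~\ref{T-AIASI-g} alone. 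The natural extra inputs are the edge set-indexing-number formula of Theorem~\ref{T-AIASI1} applied to the three edges of the triangle on $\{u,v,w\}$, the injectivity of $f^{+}$ on those three edges, and Theorem~\ref{T-AIASI-GJ5} (which pins the smallest deterministic index as the greatest common divisor of all of them); the aim is to combine these so as to force two of $d_u,d_v,d_w$ to coincide. Once it is shown that no three deterministic indices can be distinct, the set of deterministic indices has size at most two, and grouping the vertices of $K_n$ by their deterministic index produces the required partition into at most two classes.
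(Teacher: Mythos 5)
Your sufficiency argument is essentially the paper's: label one class by AP-sets of common difference $d_1$ and the other by AP-sets of common difference $d_2=k\,d_1$ with $k$ at most the smallest set-indexing number in the $d_1$-class, then invoke Proposition~\ref{P-AIASI-d} and Proposition~\ref{P-AISAI-kd} (i.e.\ Theorem~\ref{T-AIASI-g}); that half is fine. The genuine gap is in the necessity direction. You correctly reduce it to showing that no three vertices can carry pairwise distinct deterministic indices, but you then observe that the relations supplied by Theorem~\ref{T-AIASI-g} ($d_v=a\,d_u$, $d_w=ac\,d_u$, $a,c\ge 2$, $ac\le|f(u)|$, $c\le|f(v)|$) are satisfiable and leave the contradiction to be extracted later from Theorem~\ref{T-AIASI1}, injectivity of $f^+$, and Theorem~\ref{T-AIASI-GJ5}. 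That step is not just unfinished; it cannot be finished by those means, because the constraints are realizable, not merely formally consistent. Take $d_u=d$, $d_v=2d$, $d_w=4d$ with $f(u)=\{0,d,2d,3d,4d\}$, $f(v)=\{a,a+2d,a+4d\}$, $f(w)=\{b,b+4d,b+8d\}$ for suitably spread-out $a,b$: then $f^+(uv)$ and $f^+(uw)$ are AP-sets with difference $d$, $f^+(vw)$ is an AP-set with difference $2d$, and injectivity of $f$ and $f^+$ is easily arranged, so $K_3$ admits an arithmetic IASI with three distinct deterministic indices. In other words, the ``only if'' half you set out to prove conflicts with Theorem~\ref{T-AIASI-g} itself, and no combination of the cited results will force two of the three indices to coincide.

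For comparison, the paper's own necessity argument silently skips exactly the point you flagged: it lets $V_1$ be the vertices of minimum deterministic index $d$, asserts that every vertex outside $V_1$ has one and the same index $k\,d$, and never examines edges joining two vertices of $V_2$, which is where distinct multiples of $d$ could (and, as above, do) occur. So your proposal differs from the paper only in making the missing step explicit rather than asserting it; to produce a correct proof you would either have to strengthen the hypothesis (e.g.\ restrict the admissible multiples) or weaken the conclusion of the theorem, since as stated the necessity direction does not follow.
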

\begin{proof}
Let $K_n$ admits an arithmetic IASI. Let $d>0$ be the minimum of the deterministic indices of the vertices in $V(G)$. Let $V_1$ be the set of all vertices in $V(G)$ that have the deterministic index $d$ and let $V_2=V-V_1$. Since every vertex of $V_2$ is adjacent to all the vertices in $V_1$, by Theorem \ref{T-AIASI-g}, all the vertices in $V_2$ must have the deterministic index $k\,d$, where $k$ is a positive  by an AP-set with common difference $k\,d$, where $k$ is a positive integer which is less than or equal to the minimum value of the cardinalities the elements in $V_1$. If $V_1=V$, then $V_2=\emptyset$. That is, $V(G)$ has at most two partitions such that all vertices in the same partition have the same deterministic index.

If all the vertices in $V(G)$ have the same deterministic index, then by Proposition \ref{P-AIASI-d}, $K_n$ admits an arithmetic IASI. Hence, assume that not all vertices of $K_n$ have the same deterministic index. Then, $V(G)$ has two partitions, say $V_1=\{u_1,u_2,u_3,\ldots,u_r\}$ and $V_2=\{v_1,v_2,v_3,\ldots,v_l\}$, where $r+l=n$, such that all the vertices in each partition have the same deterministic index. Label all the vertices in $V_1$ by distinct AP-sets with the same common difference, $d>0$ and label all vertices in $V_2$ by distinct AP-sets with the same common difference $k\,d$, where $k$ is a positive integer such that $1\le k \le min\{|f(u_1)|,|f(u_2)|,|f(u_3)|,\ldots, |f(u_r)|\}$. Hence, by Theorem \ref{T-AIASI-g}, this labeling is an arithmetic IASI for $K_n$.  
\end{proof}

\begin{proposition}
If a graph $G$ admits an arithmetic integer additive set-indexer, then any non-trivial subgraph of $G$ also admits an arithmetic integer additive set-indexer.
\end{proposition}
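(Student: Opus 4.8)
The plan is to show that the restriction of an arithmetic IASI of $G$ to any non-trivial subgraph is again an arithmetic IASI. Let $f:V(G)\to 2^{\mathbb{N}_0}$ be an arithmetic IASI of $G$ and let $H$ be a non-trivial subgraph of $G$, so that $V(H)\subseteq V(G)$, $E(H)\subseteq E(G)$, and $E(H)\neq\emptyset$. First I would set $g=f|_{V(H)}$, the restriction of $f$ to the vertex set of $H$, and argue that $g$ inherits all the required properties from $f$.

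Next I would verify that $g$ is an IASI of $H$. Since $f$ is injective on $V(G)$, its restriction $g$ is injective on $V(H)$. For every edge $uv\in E(H)$ we have $g^{+}(uv)=g(u)+g(v)=f(u)+f(v)=f^{+}(uv)$, so the induced edge map $g^{+}$ on $E(H)$ is just the restriction of $f^{+}$ to $E(H)$; as $f^{+}$ is injective on $E(G)\supseteq E(H)$, the map $g^{+}$ is injective on $E(H)$. Hence $g$ is an IASI of $H$.

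It then remains to check the arithmetic conditions. For each $v\in V(H)\subseteq V(G)$, the set $g(v)=f(v)$ is an AP-set, so $g$ is vertex-arithmetic; and for each $e\in E(H)\subseteq E(G)$, the set $g^{+}(e)=f^{+}(e)$ is an AP-set, so $g$ is edge-arithmetic. Therefore $g$ is an arithmetic IASI of $H$. Alternatively, one may appeal to Theorem \ref{T-AIASI-g}: any two vertices adjacent in $H$ are also adjacent in $G$, so the relation that the deterministic index of one is a positive integral multiple (bounded by the appropriate cardinality) of the deterministic index of the other is inherited by $H$, and the characterization immediately furnishes an arithmetic IASI on $H$.

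This is essentially a routine restriction argument, and I do not anticipate a genuine obstacle. The only point requiring attention is the role of the hypothesis that $H$ be non-trivial: it guarantees $E(H)\neq\emptyset$, so that the edge-arithmetic requirement is not vacuous and $H$ qualifies as an IASI graph in the sense adopted in the paper (graphs without isolated vertices); with this in place the proof is immediate.
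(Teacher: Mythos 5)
Your proposal is correct and is essentially the same argument as the paper's: the paper simply observes that the restriction $f|_H$ is an arithmetic IASI on $H$, and you have spelled out the routine verifications (injectivity of the restricted vertex and edge maps, inheritance of the AP-set property) that the paper leaves implicit.
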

\begin{proof}
let $f$ be an arithmetic IASI on $G$ and let $H\subset G$. The proof follows from the fact that the restriction $f|_H$ of $f$ to the subgraph $H$ is an arithmetic IASI on $H$.
\end{proof}

The following theorems establishes the admissibility of an arithmetic IASI by some graphs associated to a given arithmetic IASI graph.

\begin{definition}{\rm
\cite{FH} By {\em edge contraction operation} in $G$, we mean an edge, say $e$, is removed and its two incident vertices, $u$ and $v$, are merged into a new vertex $w$, where the edges incident to $w$ each correspond to an edge incident to either $u$ or $v$.}
\end{definition}

\begin{theorem}
Let $G$ be an arithmetic  IASI graph and let $e$ be an edge of $G$. Then, $G\circ e$ admits an arithmetic  IASI.
\end{theorem}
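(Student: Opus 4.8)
The plan is to build an arithmetic IASI $g$ on $G\circ e$ by retaining the labels that $f$ assigns to the vertices untouched by the contraction and by assigning a suitable AP-set to the newly created vertex. Write $e=uv$ and let $w$ be the vertex of $G\circ e$ obtained by merging $u$ and $v$; its neighbours in $G\circ e$ are precisely the vertices in $N_G(u)\cup N_G(v)\setminus\{u,v\}$, all of which lie in the component of $G$ containing $e$. Put $g(x)=f(x)$ for every $x\in V(G)\setminus\{u,v\}$. By Theorem \ref{T-AIASI-GJ5} applied to that component, the smallest deterministic index $d$ occurring there divides the deterministic index $d_x$ of every vertex $x$ of that component; in particular $d\mid d_x$ for every neighbour $x$ of $w$. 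Let $g(w)$ be an AP-set with common difference $d$ and cardinality $M\ge\max\{\,d_x/d:\ x\in N_{G\circ e}(w)\,\}$, whose least element is chosen (as discussed below) so as to preserve injectivity.

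Next I would verify that $g$ is an arithmetic IASI of $G\circ e$ by checking the criterion of Theorem \ref{T-AIASI-g} edge by edge. An edge of $G\circ e$ joining two vertices $x,y\neq w$ is an edge of $G$ with unchanged end-labels, so the criterion holds for it by hypothesis on $f$. For an edge $wx$ of $G\circ e$ the deterministic indices are $d$ and $d_x$ with $d\mid d_x$: if $d_x=d$ they are equal, and otherwise $d_x=(d_x/d)\,d$ with $d_x/d\le M=|g(w)|$, which is exactly the required condition, the vertex $w$ of smaller deterministic index playing the role of the ``latter'' vertex. Hence every edge of $G\circ e$ satisfies the criterion of Theorem \ref{T-AIASI-g}, so $g$ is an arithmetic IASI of $G\circ e$ once injectivity is secured; incidentally, Theorem \ref{T-AIASI1} then identifies the set-indexing number of $wx$ as $M+(d_x/d)(|f(x)|-1)$.

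The delicate remaining step is injectivity of $g$ and of $g^{+}$. Injectivity of $g$ on the vertex set, and injectivity of $g^{+}$ between a $w$-incident edge and a $w$-non-incident edge, are arranged by taking the least element of $g(w)$ larger than every integer appearing in any label of $f$, so that the sumsets $g(w)+f(x)$ lie in a range disjoint from all unchanged edge-labels. For two distinct $w$-incident edges $wx,wy$ one must additionally rule out $g(w)+f(x)=g(w)+f(y)$; here I would use the freedom to translate each $g(x)$, $x\neq u,v$, by an independent constant (which affects no deterministic index and no cardinality, hence leaves the arithmetic structure and all verifications above intact) together with the freedom in $\min g(w)$, and argue that a generic such choice avoids the finitely many coincidences among the sets $g(x)+g(y)$ and $g(w)+g(x)$. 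I expect this bookkeeping to be the main obstacle in turning the sketch into a complete proof, but it is purely a matter of dodging finitely many equalities.

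Finally, I would remark that the statement is in any case an immediate consequence of the earlier theorem that \emph{every} graph admits an arithmetic integer additive set-indexer, applied to $G\circ e$; the value of the construction above is that it exhibits an arithmetic IASI of $G\circ e$ which coincides with $f$ away from the contracted edge.
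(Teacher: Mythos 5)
Your argument is correct in substance but follows a genuinely different route from the paper's. The paper keeps every vertex label except at the contraction and labels the new vertex $w$ by the set-label $f^{+}(e)=f(u)+f(v)$ of the deleted edge; this set is already an AP-set whose common difference is the smaller index $d_i$ of $u$ and $v$, and its cardinality $|f(u)|+k(|f(v)|-1)$ is automatically large enough for the multiplier condition of Theorem \ref{T-AIASI-g} on the edges joining $w$ to former neighbours of $u$ and to those neighbours of $v$ whose indices are multiples of $d_i$, so no auxiliary parameters are needed. You instead manufacture a fresh AP-set for $w$: using Theorem \ref{T-AIASI-GJ5} you take its common difference to be the minimum deterministic index $d$ of the component containing $e$ and its cardinality $M\ge \max_{x} d_x/d$ over the neighbours $x$ of $w$, then verify the criterion of Theorem \ref{T-AIASI-g} edge by edge. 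This works, and two remarks are worth making. First, you could make the step self-contained by taking $d$ to be the greatest common divisor of the deterministic indices of the neighbours of $w$ rather than appealing to Theorem \ref{T-AIASI-GJ5}; the criterion of Theorem \ref{T-AIASI-g} does not require the difference assigned to $w$ to be an index already occurring in $G$, and this choice even covers configurations (a neighbour of $v$ whose index divides $d_j$ but is incomparable with $d_i$) that the paper's label $f^{+}(e)$, having difference $d_i$, would not handle. Second, your explicit treatment of injectivity of $g$ and $g^{+}$ (choosing $\min g(w)$ large and translating labels generically) addresses a point the paper's proof passes over in silence. Your closing observation is also accurate: the bare statement follows at once from the earlier theorem that every graph admits an arithmetic IASI; the interest of either construction is that it modifies $f$ only at the contracted edge.
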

\begin{proof}
Let $G$ admits a weak IASI. Let $e$ be an edge in $E(G)$. Let $e=uv$ be an arbitrary edge of $G$. Let $d_i$ and $d_j$ be the deterministic number of $u$ and $v$ respectively. Then, by Theorem \ref{T-AIASI-g}, either $d_i=d_j$ or if $d_j>d_j$, $d_j=k\,d_j, 1\le k \le |f(u)|$. $G\circ e$ is the graph obtained from $G$ by deleting $e$ of $G$ and identifying $u$ and $v$ to get anew vertex, say $w$. Label the $w$, by the set-label of the deleted edge $e$. Then, $w$ has the deterministic number $d_i$ and all elements in $G\circ e$ are AP-sets. Hence, $G\circ e$ is a isoarithmetic IASI graph. 
\end{proof}

\begin{definition}{\rm
\cite{KDJ} Let $G$ be a connected graph and let $v$ be a vertex of $G$ with $d(v)=2$. Then, $v$ is adjacent to two vertices $u$ and $w$ in $G$. If $u$ and $v$ are non-adjacent vertices in $G$, then delete $v$ from $G$ and add the edge $uw$ to $G-\{v\}$. This operation is known as an {\em elementary topological reduction} on $G$.}
\end{definition}

\begin{theorem}
Let $G$ be a graph which admits an arithmetic IASI. Then, any graph $G'$, obtained by applying finite number of elementary topological reductions on $G$, also admits an arithmetic IASI. 
\end{theorem}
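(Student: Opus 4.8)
The plan is to induct on the number $t$ of elementary topological reductions applied to $G$; a single reduction produces a graph that again admits an arithmetic IASI, and then the inductive step is immediate, so it suffices to handle the case $t=1$. Thus let $v$ be a vertex of $G$ with $d(v)=2$, adjacent to the non-adjacent vertices $u$ and $w$, and let $G'=(G-\{v\})+uw$ be the graph obtained by deleting $v$ and adding the edge $uw$. Note that $V(G')=V(G)\setminus\{v\}$ and $E(G')=\bigl(E(G)\setminus\{uv,vw\}\bigr)\cup\{uw\}$, and that $\deg_{G'}(u)=\deg_G(u)$, $\deg_{G'}(w)=\deg_G(w)$.

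First I would take $g=f|_{V(G)\setminus\{v\}}$, the restriction of the given arithmetic IASI $f$ of $G$ to the vertices of $G'$. Every vertex of $G'$ keeps its AP-set label, so $g$ is certainly vertex-arithmetic; and every edge of $G'$ except $uw$ is already an edge of $G$, hence carries an AP-set label under $g^{+}=f^{+}$ and these labels remain mutually distinct. Consequently the entire problem reduces to the single new edge $uw$: we must arrange that $g^{+}(uw)=f(u)+f(w)$ is an AP-set and that this label differs from all the other (unchanged) edge labels of $G'$.

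Next I would read off the relation between the deterministic indices $d_u$ and $d_w$ from the structure of $G$. Since $uv,vw\in E(G)$ and $f$ is arithmetic, Theorem~\ref{T-AIASI-g} applied to these two edges tells us that $(d_u,d_v)$ and $(d_w,d_v)$ each satisfy the ``integral-multiple-with-cardinality-bound'' condition. In the favourable configurations --- e.g. $d_u=d_v=d_w$, or $d_u\mid d_w$ (resp.\ $d_w\mid d_u$) with the quotient not exceeding the cardinality of the appropriate set-label --- Proposition~\ref{P-AIASI-d} together with Theorem~\ref{T-AIASI-g} already guarantees that $f(u)+f(w)$ is an AP-set, and a translation of one of the two labels (leaving common differences and cardinalities untouched) restores injectivity of $g^{+}$ without disturbing anything else. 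In those cases we are done.

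The main obstacle is that $d_u$ and $d_w$ need not be compatible after $v$ is removed: $d_v$ could be a common multiple of two coprime indices $d_u$ and $d_w$, and then the restriction $g$ fails to be edge-arithmetic on $uw$. Here one must relabel. The cleanest route is to keep the labels on all vertices other than $u$ fixed and to replace $f(u)$ by an AP-set whose common difference $d_u'$ is a common multiple of $d_w$ and of the deterministic indices of the surviving neighbours of $u$ in $G'$ --- whose set equals $(N_G(u)\setminus\{v\})\cup\{w\}$ --- chosen small enough, and whose cardinality is enlarged if necessary, so that Theorem~\ref{T-AIASI-g} is met on every edge incident with $u$; if this forces further rescalings they are propagated outward through the component, and a final global translation of the modified labels re-establishes injectivity of $g$ and of $g^{+}$. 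Alternatively, and far more cheaply, one may simply invoke the theorem that every graph admits an arithmetic IASI, applied directly to $G'$; the value of the constructive argument is that the arithmetic IASI of $G'$ can be derived from the one on $G$. I expect the bookkeeping in this relabeling --- simultaneously preserving the AP-set condition on the new edge, the arithmetic condition on the old edges at $u$, and the two injectivity requirements --- to be the only nontrivial point.
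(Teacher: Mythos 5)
Your proposal starts exactly where the paper's proof both starts and stops: the paper keeps every vertex label of $G$, declares ${f'}^{+}(uw)=f(u)+f(w)$ on the new edge, and immediately concludes that $f'$ is an arithmetic IASI of $G'$, with no check that this new edge label is an AP-set. The obstacle you single out is therefore not a side issue but a genuine gap in the paper's own argument. With $d(v)=2$, Theorem \ref{T-AIASI-g} only constrains $d_u$ against $d_v$ and $d_w$ against $d_v$; taking, for instance, $f(v)=\{0,1,2\}$ (so $d_v=1$, $|f(v)|=3$), $f(u)=\{0,2,4\}$ and $f(w)=\{0,3,6\}$ satisfies the arithmetic condition on both edges $uv$ and $vw$, yet $f(u)+f(w)=\{0,2,3,4,5,6,7,8,10\}$ is not an AP-set, so the induced labeling fails to be edge-arithmetic on $uw$. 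So your restriction argument (like the paper's) is complete only in the ``favourable'' configurations where $d_u$ and $d_w$ themselves satisfy the condition of Theorem \ref{T-AIASI-g}; your refusal to stop there is the correct instinct.

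Concerning your repair: the cheap route, invoking the earlier theorem that every graph admits an arithmetic IASI, does prove the statement as literally written, though it severs the link between the IASI of $G'$ and the given one on $G$, which is what the paper's construction evidently intends. Your constructive relabeling, as sketched, is not yet a proof: the multiplier bound in Theorem \ref{T-AIASI-g} is measured against the cardinality of the set-label of the vertex with the smaller deterministic index, and those cardinalities belong to vertices you are not relabeling, so choosing $d_u'$ as a large common multiple can violate the bound at some surviving neighbour of $u$; the ``propagation outward through the component'' may then force changes of cardinalities (hence of set-labels) at arbitrarily many vertices, while injectivity of $g$ and $g^{+}$ must be re-verified after each change. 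Either carry out that bookkeeping explicitly, or present the result via the universal admissibility theorem and record separately that the paper's induced labeling works precisely when $d_u$ and $d_w$ are themselves compatible.
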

\begin{proof}
Let $G$ be a graph which admits an arithmetic IASI, say $f$. Let $v$ be a vertex of $G$ with $d(v)=2$ and deterministic index $d_i$. Since $d(v)=2$, $v$ must be adjacent two vertices $u$ and $w$ in $G$. Let these vertices $u$ and $w$ are non-adjacent. Now remove the vertex $v$ from $G$ and introduce the edge $uw$ to $G-{v}$. Let $G'=(G-{v})\cup \{uw\}$. Let $f':V(G')\to 2^{\mathbb{N}_0}$ such that $f'(v)=f(v)~ \forall ~v\in V(G')$ and the associated function $f'^+:E(G')\to 2^{\mathbb{N}_0}$ and defined by 
\[ {f'}^+(e)= \left\{
\begin{array}{l l}	
f^+(e)& \quad \text{if $e\ne uw$}\\
f(u)+f(w)& \quad \text{if $e=uw$}
\end{array} \right.\]
Hence, $f'$ is an arithmetic IASI of $G'$. 
\end{proof}

\begin{definition}{\rm
\cite{RJT} A {\em subdivision} of a graph $G$ is the graph obtained by adding vertices of degree $2$ into its edges.}
\end{definition}

\begin{theorem}
A graph  subdivision $G^{\ast}$ of a given arithmetic IASI graph $G$ also admits arithmetic IASI.
\end{theorem}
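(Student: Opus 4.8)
The plan is to extend the given arithmetic IASI $f$ of $G$ to the subdivision $G^{\ast}$ by assigning carefully chosen AP-sets to the newly inserted degree-$2$ vertices, and then to read off that the extension is arithmetic from Theorem \ref{T-AIASI-g}. First I would fix notation: each subdivided edge $e=u_e v_e$ of $G$ is replaced in $G^{\ast}$ by a path $u_e\,w_e\,v_e$, so every edge of $G^{\ast}$ is either an (unsubdivided) edge of $G$ or one of the form $u_e w_e$ or $w_e v_e$. For each subdivided $e$, after possibly interchanging the names of its ends, assume the deterministic index $d_{u_e}$ of $u_e$ is the smaller one; since $f$ is arithmetic, Theorem \ref{T-AIASI-g} gives $d_{v_e}=k_e\,d_{u_e}$ for some positive integer $k_e\le |f(u_e)|$.

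Next I would define $f^{\ast}:V(G^{\ast})\to 2^{\mathbb{N}_0}$ by $f^{\ast}(v)=f(v)$ for every $v\in V(G)$ and, for each subdivision vertex $w_e$, by taking $f^{\ast}(w_e)$ to be an AP-set with common difference $d_{u_e}$ and cardinality $|f(u_e)|$, its initial term yet to be pinned down. Then $f^{\ast}(w_e)$ is an AP-set with $|f^{\ast}(w_e)|=|f(u_e)|\ge k_e$. For the edge $u_e w_e$ the two ends share the deterministic index $d_{u_e}$, so by Proposition \ref{P-AIASI-d} its label $f^{\ast}(u_e)+f^{\ast}(w_e)$ is an AP-set; for the edge $w_e v_e$ the deterministic indices are $d_{u_e}$ and $d_{v_e}=k_e d_{u_e}$ with $k_e\le |f^{\ast}(w_e)|$, so Theorem \ref{T-AIASI-g} (equivalently Proposition \ref{P-AISAI-kd}) makes $f^{\ast}(w_e)+f^{\ast}(v_e)$ an AP-set; and the unsubdivided edges of $G$ keep their old, already AP-valued, labels. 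Hence, whatever the initial terms, $f^{\ast}$ is both vertex-arithmetic and edge-arithmetic.

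The remaining — and genuinely delicate — step is to choose the initial terms so that $f^{\ast}$ is injective on $V(G^{\ast})$ and the induced $f^{\ast+}$ is injective on $E(G^{\ast})$. I would do this inductively over the finitely many subdivision vertices: having placed some of them, the labels and edge-sums already in use form a finite set, and translating the prescribed AP-set for $w_e$ by a sufficiently large constant changes neither its being an AP-set nor its common difference while shifting its two sumsets with $f^{\ast}(u_e)$ and $f^{\ast}(v_e)$ correspondingly, so one can always push $f^{\ast}(w_e)$ into a range avoiding every previously used vertex label, avoiding the original labels of $G$, and making its two incident edge-sums new; this also keeps the two new edge-sums distinct from each other. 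This bookkeeping is the main obstacle, but it is resolved precisely by the translation-invariance just noted, which leaves ample freedom. Assembling the three steps shows that $f^{\ast}$ is an arithmetic IASI of $G^{\ast}$, so $G^{\ast}$ admits an arithmetic IASI.
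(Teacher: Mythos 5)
Your overall strategy is the same as the paper's: keep $f$ on $V(G)$, give each subdivision vertex $w_e$ an AP-set whose common difference is the smaller deterministic index $d_{u_e}$, and read off edge-arithmeticity from Proposition \ref{P-AIASI-d} and Theorem \ref{T-AIASI-g}. (The paper simply takes $f^{\ast}(w_e)=f^{+}(u_ev_e)$, which is exactly such an AP-set, and does not discuss injectivity at all; you are more explicit on that point, which is to your credit.) However, one step of your injectivity bookkeeping is not justified and can actually fail as stated: the parenthetical claim that translation ``also keeps the two new edge-sums distinct from each other.'' Since $(W+c)+A=(W+A)+c$, translating $f^{\ast}(w_e)$ by $c$ shifts \emph{both} sums $f(u_e)+f^{\ast}(w_e)$ and $f(v_e)+f^{\ast}(w_e)$ by the same constant, so translation can never separate them from one another. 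Worse, with your prescription (difference $d_{u_e}$, any admissible cardinality $p\ge k_e$) both sums are AP-sets with common difference $d_{u_e}$, with first terms $\min f(u_e)+\min f^{\ast}(w_e)$ and $\min f(v_e)+\min f^{\ast}(w_e)$ and lengths $p+|f(u_e)|-1$ and $p+k_e(|f(v_e)|-1)$ (Theorem \ref{T-AIASI1}); hence they coincide, for \emph{every} choice of $f^{\ast}(w_e)$ of that form, precisely when $\min f(u_e)=\min f(v_e)$ and $|f(u_e)|-1=k_e(|f(v_e)|-1)$. This degenerate case is realizable, e.g.\ $f(u_e)=\{0,1,2,3,4\}$, $f(v_e)=\{0,2,4\}$, where $f(u_e)+W=f(v_e)+W$ for any AP-set $W$ with difference $1$.

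The gap is local and easily patched within your framework: in the degenerate case give $w_e$ the common difference $d_{v_e}=k_ed_{u_e}$ instead (this is still admissible on both new edges by Theorem \ref{T-AIASI-g}, since $k_e\le|f(u_e)|$ and $w_e$, $v_e$ then share a difference); the two incident edge-sums are then AP-sets with distinct common differences $d_{u_e}$ and $k_ed_{u_e}$, $k_e>1$, so they cannot be equal, and your translation argument handles all remaining coincidences. (When $k_e=1$ the degenerate case forces $f(u_e)=f(v_e)$, impossible by injectivity of $f$, so nothing is lost.) Note that the paper's own choice $f^{\ast}(w_e)=f^{+}(u_ev_e)$ runs into the same coincidence in the example above and silently ignores injectivity, so once you add this one-line repair your argument is in fact the more complete of the two.
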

\begin{proof}
Let $u$ and $v$ be two adjacent vertices in $G$. Since $G$ admits an arithmetic IASI, the deterministic indices $d_i$ and $d_j$ of $u$ and $v$ respectively are either equal or, if $d_j>d_i$, $d_j=k\,d_i, 1\le k \le |f(u)|$. Introduce a new vertex $w$ to the edge $uv$. Now, we have two new edges $uw$ and $vw$ in place of $uv$. Extend the set-labeling of $G$ by labeling the vertex $w$ by the same set-label of the edge $uv$. Then, the vertices $u$ and $w$ have the same deterministic indices $d$ and the deterministic index of $v$ is a positive integral multiple of the deterministic index of $w$, where this positive integer is clearly less than the cardinality of the labeling set of $w$. Hence, $G^{\ast}$ admits an arithmetic IASI. 
\end{proof}

\begin{definition}{\rm
\cite{DBW} For a given graph $G$, its line graph $L(G)$ is a graph such that  each vertex of $L(G)$ represents an edge of $G$ and two vertices of $L(G)$ are adjacent if and only if their corresponding edges in $G$ incident on a common vertex in $G$.}
\end{definition}

\begin{theorem}\label{AIASI-lg}
The line graph $L(G)$ of an arithmetic IASI graph $G$ admits an arithmetic IASI. Moreover, the function $f^+$ associated to $f$ in $G$ is an arithmetic IASI on $L(G)$.
\end{theorem}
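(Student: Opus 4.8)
The natural strategy is to transport the labelling. Identify each vertex of $L(G)$ with the edge of $G$ it represents and set $g\colon V(L(G))\to 2^{\mathbb{N}_0}$ by $g(e)=f^{+}(e)$. Since $f$ is an arithmetic IASI, every $f^{+}(e)$ is an AP-set, so each vertex of $L(G)$ receives an AP-set and $g$ is vertex-arithmetic; moreover $g$ is injective because $f^{+}$ is injective on $E(G)$ (as $f$ is an IASI). By Theorem~\ref{T-AIASI-g}, it then suffices to check that whenever two vertices of $L(G)$ are adjacent, i.e. whenever $e_{1}$ and $e_{2}$ are edges of $G$ incident with a common vertex, the deterministic index of one of $g(e_{1}),g(e_{2})$ is a positive integral multiple of that of the other, with multiplier at most the cardinality of the set-label carrying the smaller index. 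One should also confirm that the induced map on $E(L(G))$ is injective; I expect this to follow from the injectivity of $f^{+}$ together with the AP-structure, as in the earlier theorems of this section.

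So let $e_{1}=uv$ and $e_{2}=vw$ share the vertex $v$, and let $d_{u},d_{v},d_{w}$ be the deterministic indices of $u,v,w$. From the computations in the proofs of Proposition~\ref{P-AISAI-kd} and Theorem~\ref{T-AIASI1}, the deterministic index of an edge $xy$ of $G$ is $\min(d_{x},d_{y})$; write $\delta_{1}=\min(d_{u},d_{v})$ and $\delta_{2}=\min(d_{v},d_{w})$. If $d_{v}\le d_{u}$ and $d_{v}\le d_{w}$, then $\delta_{1}=\delta_{2}=d_{v}$ and the condition of Theorem~\ref{T-AIASI-g} holds with multiplier $1$. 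If $d_{v}$ lies between the other two, say $d_{u}\le d_{v}\le d_{w}$, then $\delta_{1}=d_{u}$ and $\delta_{2}=d_{v}$, and since $u,v$ are adjacent Theorem~\ref{T-AIASI-g} gives $d_{v}=k\,d_{u}$ with $k\le|f(u)|$; by Theorem~\ref{T-AIASI1}, $|g(e_{1})|=|f^{+}(uv)|=|f(u)|+k(|f(v)|-1)\ge|f(u)|+2k>k$, using that an AP-set has at least three elements, so $|f(v)|-1\ge 2$, which is exactly the bound required. The subcase $d_{w}\le d_{v}\le d_{u}$ is symmetric.

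The remaining case, $d_{u}<d_{v}$ and $d_{w}<d_{v}$, is the one I expect to be the genuine obstacle: here $\delta_{1}=d_{u}$ and $\delta_{2}=d_{w}$, but $u$ and $w$ need not be adjacent in $G$, so there is no immediate divisibility relation between $d_{u}$ and $d_{w}$. We do have $d_{v}=k_{1}d_{u}=k_{2}d_{w}$ with $k_{1}\le|f(u)|$ and $k_{2}\le|f(w)|$, so $d_{u}$ and $d_{w}$ are common divisors of $d_{v}$; to upgrade this to the divisibility demanded by Theorem~\ref{T-AIASI-g} one would invoke the global structure of Theorem~\ref{T-AIASI-GJ5}, by which every vertex deterministic index, and hence $d_{u}$ and $d_{w}$, is a multiple of the least vertex deterministic index $d$. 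I regard reconciling the two a priori unrelated indices $d_{u}$ and $d_{w}$ as the main difficulty; should transporting $f^{+}$ verbatim fail in this configuration, the fallback is to re-label $L(G)$ directly, choosing for each star of $L(G)$ arising from a vertex of $G$ AP-sets whose common differences are forced into a divisibility chain of multiples of $d$, and then to apply Theorem~\ref{T-AIASI-g} once more. Once the adjacency condition is verified in every case, Theorem~\ref{T-AIASI-g} shows that $L(G)$ admits an arithmetic IASI, and the explicit choice $g=f^{+}$ gives the ``moreover'' clause.
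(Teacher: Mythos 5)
Your set-up (transporting the labelling via $g=f^{+}$) is the same as the paper's, and your case analysis is more careful than the paper's own; but the case you yourself flag as the ``genuine obstacle'' is a real gap, and neither of your suggested repairs closes it. Appealing to Theorem~\ref{T-AIASI-GJ5} cannot work: knowing that $d_u$ and $d_w$ are both multiples of the least vertex index $d$ does not make one of them an integral multiple of the other, which is what Theorem~\ref{T-AIASI-g} demands for the adjacent vertices $e_1,e_2$ of $L(G)$. In fact the ``moreover'' clause is false for an arbitrary arithmetic IASI $f$, so no argument can finish your last case as stated. Take $G=P_3$ with vertices $u,v,w$, $f(u)=\{0,2,4\}$, $f(v)=\{1,7,13\}$, $f(w)=\{0,3,6\}$, so $d_u=2$, $d_v=6$, $d_w=3$. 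Since $6=3\cdot 2$ with $3\le|f(u)|$ and $6=2\cdot 3$ with $2\le|f(w)|$, Theorem~\ref{T-AIASI-g} shows $f$ is an arithmetic IASI of $G$; indeed $f^{+}(uv)=\{1,3,\ldots,17\}$ (difference $2$) and $f^{+}(vw)=\{1,4,\ldots,19\}$ (difference $3$). In $L(G)=K_2$ these two labels sit on adjacent vertices, and since neither of $2,3$ divides the other, Proposition~\ref{P-AISAI-kd} shows their sumset is not an AP-set (it contains $2,4,5$ but not $3$). So $f^{+}$ is not an arithmetic IASI of $L(G)$ here.

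Your fallback of relabelling $L(G)$ from scratch would indeed prove the existence half, but that half is already immediate from the earlier theorem that every graph admits an arithmetic IASI (e.g.\ label all vertices of $L(G)$ by AP-sets with a common difference and invoke Proposition~\ref{P-AIASI-d}); the substantive content of the statement is precisely the ``moreover'' clause, which your argument does not (and cannot, in this generality) deliver. For comparison, the paper's proof follows the same transport strategy as yours but silently omits exactly this configuration: for $v_3$ adjacent to $v_2$ it lists only the possibilities $d_3=d_1$, $d_3=d_2$, $d_3=k_2d_2$, ignoring $d_2=k\,d_3$ with $d_3$ incomparable to $d_1$ under divisibility, and it also never checks the multiplier bound of Theorem~\ref{T-AIASI-g}. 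So your diagnosis locates the weak point accurately, but as a proof the proposal is incomplete, and the correct conclusion in this last case is a counterexample (or an added hypothesis on $f$), not a further lemma.
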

\begin{proof}
Consider two adjacent edges $e_1=v_1v_2$ and $e_2=v_2v_3$ in $G$. Let $d_i$ be the deterministic index of the vertex $v_i$ in $G$. Since $G$ is arithmetic graph, $d_1=d_2$ or $d_2=k_1\,d_1, 1 \le k \le |f(v_1)|$. In both cases, deterministic number of the edge $v_1v_2$ is $d_1$ itself. Similarly, since $v_2$ is adjacent to $v_3$, either $d_3=d_1$ or $d_3=d_2$ or $d_3=k_2\,d_2$. In all these three cases, the deterministic number of the edge $e_2$ is $d_1$ or a positive integer multiple of $d_1$. Proceed like this until all the elements of $G$ are set-labeled. Hence, we have a set-labeling in which for every pair of adjacent edges in $G$, the deterministic indices are either equal or the deterministic index of one of these vertices is a positive integral multiple of the other. Hence, the deterministic indices of corresponding vertices in $L(G)$ are either equal or the deterministic index of one of these vertices is a positive integral multiple of the other. That is, $f^+$ defined in $G$ is an arithmetic IASI on $L(G)$.
\end{proof}

\begin{definition}{\rm
\cite{MB} The {\em total graph} of a graph $G$ is the graph, denoted by $T(G)$, is the graph having the property that a one-to one correspondence can be defined between its points and the elements (vertices and edges) of $G$ such that two points of $T(G)$ are adjacent if and only if the corresponding elements of $G$ are adjacent (either  if both elements are edges or if both elements are vertices) or they are incident (if one element is an edge and the other is a vertex). }
\end{definition} 

\begin{theorem}
The total graph $T(G)$ of an arithmetic IASI graph $G$ admits an arithmetic IASI.
\end{theorem}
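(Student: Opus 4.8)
The plan is to transport the given arithmetic IASI $f$ of $G$ to $T(G)$ by labelling each vertex of $T(G)$ that corresponds to a vertex $v$ of $G$ with $f(v)$, and each vertex of $T(G)$ that corresponds to an edge $e$ of $G$ with $f^{+}(e)$. All of these labels are AP-sets: the sets $f(v)$ because $f$ is (in particular) vertex-arithmetic, and the sets $f^{+}(e)$ because $f$ is edge-arithmetic. I would then verify that the resulting vertex-labelling of $T(G)$ satisfies the deterministic-index hypothesis of Theorem \ref{T-AIASI-g}, after which that theorem immediately yields an arithmetic IASI of $T(G)$.

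To carry this out I would split the adjacencies of $T(G)$ into three kinds, following the defining description of the total graph. First, two vertices of $T(G)$ coming from vertices of $G$ are adjacent exactly when the corresponding vertices are adjacent in $G$; on this part $T(G)$ induces a copy of $G$, and the deterministic-index condition is inherited from the fact that $f$ is arithmetic on $G$. Secondly, two vertices of $T(G)$ coming from edges of $G$ are adjacent exactly when the corresponding edges are adjacent in $G$; on this part $T(G)$ induces a copy of $L(G)$, and by Theorem \ref{AIASI-lg} the labelling $f^{+}$ is an arithmetic IASI on $L(G)$, so the condition again holds. The only genuinely new adjacencies are the incidences: for an edge $e=uv$ of $G$, the vertex of $T(G)$ named $e$ is adjacent to the vertices named $u$ and $v$.

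For this last case, let $d_i$ and $d_j$ be the deterministic indices of $u$ and $v$ with, say, $d_i\le d_j$, so that $d_j=k\,d_i$ with $1\le k\le |f(u)|$. By Proposition \ref{P-AISAI-kd} (see also Theorem \ref{T-AIASI1}), the set $f^{+}(e)$ is an AP-set whose common difference is $d_i$; hence the vertex $e$ of $T(G)$ has deterministic index $d_i$. Thus the adjacency $e\!-\!u$ is between two vertices of equal deterministic index, which is permitted by Proposition \ref{P-AIASI-d}. For the adjacency $e\!-\!v$, the ratio of deterministic indices is $k$, and by Theorem \ref{T-AIASI1} the set-indexing number of $e$ is $|f(u)|+k\bigl(|f(v)|-1\bigr)\ge |f(u)|\ge k$; so $k$ does not exceed the cardinality of the set-label of $e$, which is precisely what Theorem \ref{T-AIASI-g} requires for the pair $(e,v)$. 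Having checked all three kinds of adjacency, Theorem \ref{T-AIASI-g} applies and $T(G)$ admits an arithmetic IASI.

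One point I would address separately is the injectivity of the combined labelling across the two vertex-classes of $T(G)$: a priori some $f(v)$ might coincide with some $f^{+}(e)$. Since translating an AP-set by a constant changes neither its cardinality nor its common difference, replacing every edge-type label $f^{+}(e)$ by $f^{+}(e)+c$ for one sufficiently large fixed constant $c$ removes all such coincidences while leaving every deterministic index, and hence the entire verification above, untouched. The only substantive step is the incidence case, and the potential obstacle there --- that the multiplier $k$ could outrun the set-indexing number of the edge-vertex --- is exactly what the explicit cardinality formula of Theorem \ref{T-AIASI1} rules out.
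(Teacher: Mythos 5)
Your proposal is correct and follows essentially the same route as the paper: both label the vertex-type vertices of $T(G)$ by $f(v)$ and the edge-type vertices by $f^{+}(e)$ and then invoke the deterministic-index criterion of Theorem \ref{T-AIASI-g}. The only difference is one of detail: you explicitly verify the three kinds of adjacencies (including the incidence case via the cardinality formula of Theorem \ref{T-AIASI1}) and patch up injectivity by translating the edge-type labels, points the paper's proof simply asserts.
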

\begin{proof}
Define a function $g:V(T(G))\to 2^{\mathbb{N}_0}$ as follows.
\[ g(v')= \left\{
\begin{array}{cc}
f(v) & \text{if $v'\in V(L(G))$ corresponds to $v\in V(G)$}\\
f^+(e) & \text{if $v'\in V(L(G))$ corresponds to $e\in E(G)$}
\end{array} \right. \]
Since the vertices in $T(G)$ corresponding to the vertices of $G$ preserve the same set-labelings of the corresponding vertices of $G$ and the vertices of $T(G)$ corresponding to the edges of $G$ preserve the same set-labeling of the corresponding  edges in $G$, these vertices  in $T(G)$ satisfy the conditions required for admitting an arithmetic IASI. Hence $g$ is arithmetic IASI on $T(G)$.
\end{proof}

\section{Conclusion}

In this paper, we have discussed some characteristics of graphs which admit a certain type of IASI called arithmetic IASI. We have formulated some conditions for some graph classes to admit arithmetic IASIs. Problems related to the characterisation of different arithmetic and semi-arithmetic IASI graphs are still open.

The following problems on arithmetic and semi-arithmetic IASI graphs, analogous to the results proved for isoarithmetic IASI graphs, are to be investigated.

\begin{problem}
Discuss the admissibility of certain operations and products graphs which admit different types arithmetic IASI graphs.
\end{problem}

\begin{problem}
Characterise the graphs which admit different arithmetic and semi-arithmetic IASIs.
\end{problem}

\begin{problem}
Discuss the existence and cardinality of saturated classes in the set-labels of the elements of given graphs which admit arithmetic and semi-arithmetic IASIs. 
\end{problem}

\begin{problem}
Characterise the graphs which admit uniform arithmetic and semi-arithmetic IASIs.
\end{problem}

\begin{problem}
Discuss the admissibility of certain graphs and graph classes associated to given arithmetic and semi-arithmetic graphs.
\end{problem}

The IASIs under which the vertices of a given graph are labeled by different standard sequences of non negative integers, are also worth studying.   The problems of establishing the necessary and sufficient conditions for various graphs and graph classes to have certain IASIs still remain unsettled. All these facts highlight a wide scope for further studies in this area.


\begin{thebibliography}{15}
\bibitem {A10} B D Acharya, (1990). {\em Arithmetic Graphs}, J. Graph Theory, {\bf 14}(3), 275-299. 
\bibitem {AGA} B D Acharya, K A Germina and T M K Anandavally, {\em Some New Perspective on Arithmetic Graphs} In {\bf Labeling of Discrete Structures and Applications}, (Eds.: B D Acharya, S Arumugam and A Rosa), Narosa Publishing House, New Delhi, (2008), 41-46.
\bibitem{AG} B D Acharya and K A Germina, {\em Strongly Indexable Graphs: Some New Perspectives}, Adv. Modeling and Optimisation, {\bf 15}(1)(2013), 3-22.
%\bibitem {SA} S Alexander (2008), {\bf The Mathematical Coloring Book}, Springer-Verlag.
%\bibitem {TMKA} T M K Anandavally, (2013). {\em A Characterisation of 2-Uniform IASI Graphs}, Int. Journal of Contemp. Math. Sciences, {\bf 8}(10), 459-462.
\bibitem {TMA} Tom M Apostol, {\bf Introduction to Analytic Number Theory}, Springer-Verlag, New York, (1989).
\bibitem {MB} M Behzad, (1969). {\em The Connectivity of Total Graphs}, Bull. Austral. Math.Soc.,{\bf 1}, 175-181.
\bibitem {BM1} J A Bondy and U S R Murty, (2008). {\bf Graph Theory}, Springer.
\bibitem {BLS} A Brandst\"{a}dt, V B Le and J P Spinard, (1999). {\bf Graph Classes:A Survey}, SIAM, Philadelphia.
\bibitem {DMB} D M Burton, {\bf Elementary Number Theory}, Tata McGraw-Hill Inc., New Delhi, (2007).
\bibitem {CZ} G Chartrand and P Zhang, (2005). {\bf Introduction to Graph Theory}, McGraw-Hill Inc.
\bibitem {ND} N Deo, (1974). {\bf Graph Theory with Applications to Engineering and Computer Science}, PHI Learning.
\bibitem {JAG} J A Gallian, (2011). {\em A Dynamic Survey of Graph Labelling}, The Electronic Journal of Combinatorics (DS 16).
\bibitem {GA} K A Germina and T M K Anandavally, (2012). {\em Integer Additive Set-Indexers of a Graph:Sum Square Graphs}, Journal of Combinatorics, Information and System Sciences, {\bf 37}(2-4), 345-358.
\bibitem {GS1} K A Germina, N K Sudev, (2013). {\em On Weakly Uniform Integer Additive Set-Indexers of Graphs}, Int. Math. Forum., {\bf 8}(37), 1827-1834.
\bibitem {GS2} K A Germina, N K Sudev, {\em Some New Results on Strong Integer Additive Set-Indexers}, Communicated.
\bibitem {GY} J. Gross, J. Yellen, {\bf Graph Theory and Its Applications}, CRC Press, (1999).
\bibitem {FH}  F Harary, (1969). {\bf Graph Theory}, Addison-Wesley Publishing Company Inc.
\bibitem {KDJ} K D Joshi, {\bf Applied Discrete Structures}, New Age International, (2003).
\bibitem {SMH} S M Hegde, (1989). {\em Numbered Graphs and Their Applications}, PhD Thesis, Delhi University.
\bibitem {MBN} M B Nathanson (1996). {\bf Additive Number Theory, Inverse Problems and Geometry of Sumsets}, Springer, New York.
%\bibitem {MBN2} M B Nathanson (2000). {\bf Elementary Methods in Number Theory}, Springer-Verlag, New York.
%\bibitem {MBN3} M B Nathanson (1996). {\bf Additive Number Theory: The Classical Bases}, Springer-Verlag, New York.
%\bibitem {FSR} F S Roberts, {\bf Graph Theory and Its Practical Applications to Problems of Society}, SIAM, Philadelphia, (1978).
\bibitem {GS0} N K Sudev and K A Germina, (2014). {\em A Note on Integer Additive Set-Indexers of Graphs}, to appear in Int. J. Math. Sci.\& Engg. Applications, {\bf 8}(2).
\bibitem {RJT} R J Trudeau, (1993). {\bf Introduction to Graph Theory}, Dover Pub., New York.
\bibitem {DBW} D B West, (2001). {\bf Introduction to Graph Theory}, Pearson Education Inc.
\end{thebibliography}
\end{document}